\renewcommand\@biblabel[1]{${#1}.$}
\title{\bf The Radial Part of Brownian Motion with respect to  $\mathcal{L}$-Distance under Ricci Flow}
\author{{\bf Li-Juan Cheng \footnote{\scriptsize School of Mathematical Sciences,  Beijing Normal
University,  Laboratory of Mathematics and Complex
Systems,  Ministry of Education,  Beijing 100875,
The People's Republic of China.  E-mail: chenglj@mail.bnu.edu.cn(L.J. Cheng)
}}}
\date{}
\newtheorem{theorem}{Theorem}[section]
\newtheorem{lemma}[theorem]{Lemma}
\newtheorem{proposition}[theorem]{Proposition}
\newtheorem{definition}[theorem]{Definition}
\newtheorem{remark}[theorem]{Remark}
\newtheorem{example}[theorem]{Example}
\numberwithin{equation}{section} \catcode`@=11
\begin{document}
\maketitle
\begin{abstract}
Let $\{g_t\}_{t\in [0,T)}$ be a family of complete time-depending Riemannian matrics on a manifold which evolves under backwards Ricci flow.
The It\^{o} formula is established for the $\mathcal{L}$-distance of the $g_t$-Brownian motion to a fixed
reference point ($\mathcal{L}$-base).  Furthermore, as an application, we
construct a coupling by parallel displacement which yields  a new proof
of some results of Topping.
\end{abstract}
 \noindent {\bf Keywords}: \ \ {Ricci flow,  $\mathcal{L}$-functional, $\mathcal{L}$-cut-locus, $g_t$-Brownian motion, coupling }

\noindent {\bf MSC(2010)}:  \ \ {60J65, 53C44, 58J65}


\def\dint{\displaystyle\int}
\def\ct{\cite}
\def\lb{\label}
\def\ex{Example}
\def\vd{\mathrm{d}}
\def\dis{\displaystyle}
\def\fin{\hfill$\square$}
\def\thm{theorem}
\def\bthm{\begin{theorem}}
\def\ethm{\end{theorem}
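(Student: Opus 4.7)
The plan is to adapt Kendall's semimartingale treatment of the Riemannian radial process to the $\mathcal{L}$-distance under backwards Ricci flow. Since the $\mathcal{L}$-distance $L(\cdot,\tau)$ to a fixed $\mathcal{L}$-base is smooth on the complement of the $\mathcal{L}$-cut-locus but only locally Lipschitz (and semiconcave) there, I would split the proof into two pieces: an Itô expansion in the smooth region, and a Kendall-type control of the singular contribution that appears when the $g_t$-Brownian motion $X_t$ meets the $\mathcal{L}$-cut-locus.

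For the smooth part, I would realise $X_t$ through the horizontal lift on the $g_t$-orthonormal frame bundle, which is the natural construction of Brownian motion for a time-dependent metric. Applying Itô's formula to $L(X_t,\tau)$ produces a local martingale whose quadratic variation is determined by $|\nabla^{g_t} L|_{g_t}^2$, together with a drift equal to $(\partial_\tau+\tfrac{1}{2}\Delta_{g_t})L$ evaluated along the path. The essential geometric input is that this drift can be controlled by Perelman's first and second variation formulas for $\mathcal{L}$: the second variation along a minimising $\mathcal{L}$-geodesic, combined with the evolution equation $\partial_\tau g=2\mathrm{Ric}$, yields a pointwise upper bound on $\Delta_{g_t}L+2\partial_\tau L$ in terms of $\sqrt{\tau}$, the scalar curvature $R$, and quantities computable from the minimising $\mathcal{L}$-geodesic, thereby producing an explicit (inequality) expression for the drift on the smooth set.

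To deal with the $\mathcal{L}$-cut-locus, I would borrow Kendall's approximation trick: approximate $L(\cdot,\tau)$ from above by smooth functions obtained by slightly perturbing the base point (or equivalently by taking the $\mathcal{L}$-distance to a small ball around it), apply Itô's formula to each smooth approximant, and then pass to the limit. The limiting semimartingale decomposition of $L(X_t,\tau)$ retains the martingale and drift terms produced on the smooth set, together with an additional non-decreasing finite variation process $A_t$ entering with a minus sign, which encodes the local time of $X_t$ at the $\mathcal{L}$-cut-locus. The main obstacle will be establishing the $\mathcal{L}$-analogue of the Kendall--Cranston cut-locus lemma in the time-dependent setting: one needs a semiconcavity estimate and a Hessian (i.e.\ index form) comparison for $L$ that is robust enough to bound the approximants uniformly in the perturbation parameter, while correctly tracking the extra $\partial_\tau g=2\mathrm{Ric}$ terms coming from the evolving metric. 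Once this cut-locus bookkeeping is in place, assembling the three ingredients---martingale part, controlled drift, and non-positive local-time correction---yields the claimed Itô formula for the $\mathcal{L}$-radial process.
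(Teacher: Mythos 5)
Your plan matches the paper's proof in all essential respects: an Itô expansion on the complement of the $\mathcal{L}$-cut-locus, Perelman's $\mathcal{L}$-comparison estimates to bound the drift $\Delta_t Q+\partial_t Q$, and a Kendall-style domination of $Q$ near the cut-locus by a smooth $Q^+$ built from a shifted intermediate base point on the minimizing $\mathcal{L}$-geodesic, assembled via a stopping-time decomposition into a semimartingale with a non-increasing local-time term. The only cosmetic discrepancy is your $\tfrac{1}{2}\Delta_{g_t}$ normalization, whereas the paper's $g_t$-Brownian motion is generated by $\Delta_t$, so the drift there is $\Delta_t Q+\partial_t Q$.
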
}
\def\blem{\begin{lemma}}
\def\elem{\end{lemma}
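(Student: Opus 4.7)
The plan is to extend Kendall's classical Itô formula for the radial part of Brownian motion to the present setting, where the metric $g_t$ evolves under backwards Ricci flow and ordinary distance is replaced by Perelman's $\mathcal{L}$-distance $\mathcal{L}(x,\tau)$ from the chosen $\mathcal{L}$-base. Off the $\mathcal{L}$-cut-locus, $\mathcal{L}(\cdot,\tau)$ is smooth, so I would begin by applying a time-dependent Itô formula to $\mathcal{L}(X_t,\tau(t))$, where $X_t$ is the $g_t$-Brownian motion. The essential ingredients are (i) Perelman's first and second variation formulas for $\mathcal{L}$, which control the spatial gradient and the Hessian (equivalently the $g_t$-Laplacian applied to $\mathcal{L}$), and (ii) the evolution of $\mathcal{L}$ in the parameter $t$ coupled to $\partial_t g_t = 2\,\mathrm{Ric}(g_t)$. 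Combining these, the drift of $\mathcal{L}(X_t,\tau(t))$ should split into an explicit contribution involving the scalar curvature and reduced length along the trajectory, plus a martingale part whose quadratic variation is read off from $|\nabla \mathcal{L}|^2_{g_t}$.

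The next step is to deal globally with the $\mathcal{L}$-cut-locus, where $\mathcal{L}$ fails to be $C^1$. Following the Kendall--Cranston philosophy, I would approximate $\mathcal{L}$ from above by smooth functions that coincide with it outside a shrinking neighborhood of the cut-locus, apply Itô in the smooth regime, and pass to the limit to obtain a formula with an additional nonpositive drift correction supported on the cut-locus, playing the role of a local time. This requires an accompanying structural property of the $\mathcal{L}$-cut-locus under backwards Ricci flow, essentially that $X_t$ spends Lebesgue-null time there; such a property should be deducible from the expected codimension-one structure of the $\mathcal{L}$-cut-locus together with Malliavin-type nondegeneracy of $X_t$.

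The main obstacle, I expect, is carrying out the smooth computation coherently in the presence of two distinct time parameters: the flow parameter $t$ carried by $g_t$ and the $\mathcal{L}$-geodesic parameter $\tau$. In the classical fixed-metric situation, the variation formulas for $d(\cdot,o)$ produce the Laplace comparison inequality directly; here one must ensure that Perelman's second variation is applied at the correct metric along the path, that the $\mathrm{Ric}$-terms produced by differentiating $g_t$ in $t$ combine correctly with those arising from the Bochner-type computation for the $g_t$-Laplacian, and that the cancellations which underlie Perelman's reduced-length inequality actually survive stochastic differentiation. Once the on-cut-locus formula and the cut-locus correction are in place, the coupling by parallel displacement used to recover Topping's estimates should follow by the standard comparison argument applied to the two radial parts.
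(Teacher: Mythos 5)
Your plan correctly identifies the overall Kendall--Cranston philosophy that the paper follows: Itô off the $\mathcal{L}$-cut-locus, a comparison estimate for $\Delta_t Q + \partial_t Q$ built on Perelman's first and second variation formulas, a Lebesgue-null-occupation-time statement for the cut-locus, and a one-sided local-time correction at the cut-locus. You also correctly flag the need to reduce to the compact case. However, the pivotal step is stated in a way that leaves a genuine gap, and it is precisely the step where the paper spends its effort.

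You propose to ``approximate $\mathcal{L}$ from above by smooth functions that coincide with it outside a shrinking neighborhood of the cut-locus.'' This is not what the paper does, and as phrased it is not obviously possible: $Q$ is only Lipschitz across the cut-locus, and there is no canonical global smooth upper bound with prescribed coincidence set. The paper's device is local and pointwise: for a cut point $(x_0,t_0)$ it picks a midpoint $(\tilde o,\tilde t)$ on a minimizing $\mathcal{L}$-geodesic from $(o,0)$ to $(x_0,t_0)$ (with $\tilde t=t_0/2$), and defines $Q^+(x,t):=Q(o,0;\tilde o,\tilde t)+Q(\tilde o,\tilde t;x,t)$. By the triangle inequality $Q^+\ge Q$ everywhere, with equality at $(x_0,t_0)$; and, crucially, the set $A$ of such midpoint configurations is closed and disjoint from $\LC$, so there is a uniform $\delta_1>0$ such that $(X_t,t)$ stays off $\LC((\tilde o,\tilde t))$ as long as it is within $\delta_1$ of $(x_0,t_0)$, which makes $Q^+$ smooth there and subject to the same comparison bound $V(t)$. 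One then runs the $g_t$-Brownian motion between stopping times $S_n^\delta$ (hit the cut-locus) and $T_n^\delta$ (exit a $\delta$-ball), proves the supermartingale estimate for $Q-\int V$ on each piece, shows $\sum_n |T_n^\delta-S_n^\delta|\to 0$ as $\delta\downarrow 0$, and identifies the martingale part via a quadratic-variation argument. Your proposal does not contain the $Q^+$/midpoint chaining construction, which is the technical heart of Lemma 3.5 and without which one cannot extract the one-sided drift bound at the cut-locus.

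Two secondary discrepancies. First, the null-occupation-time statement is not obtained via codimension counting and Malliavin nondegeneracy: the paper uses the fact (from Kuwada--Philipowski) that the law of $X_t$ is absolutely continuous with respect to the $g_t$-volume, together with the known measure-zero property of $\LC_t(o)$. Second, your ``main obstacle'' of two time parameters is in fact dissolved in the paper by keeping only one time variable in $Q(x,t)=Q(o,0;x,t)$ and letting the comparison lemmas (Lemmas 3.3 and 3.4, which use the internal $\mathcal{L}$-geodesic parameter) do the work; the genuine subtlety is rather that the diffusion coefficient $2\sqrt{2t}|\dot\gamma(t)|_t$ of the martingale part is no longer constant, because $\sqrt{t}\dot\gamma(t)$ is not space-time parallel along the $\mathcal{L}$-geodesic.
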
}
\def\brem{\begin{remark}}
\def\erem{\end{remark}}
\def\bexm{\begin{example}}
\def\eexm{\end{example}}
\def\bcor{\bg{corollary}}
\def\ecor{\end{corollary}}
\def\r{\right}
\def\l{\left}
\def\var{\text {\rm Var}}
\def\lmd{\lambda}
\def\alp{\alpha}
\def\gm{\gamma}
\def\Gm{\Gamma}
\def\e{\operatorname{e}}
\def\gap{\text{\rm gap}}
\def\dsum{\displaystyle\sum}
\def\dsup{\displaystyle\sup}
\def\dlim{\displaystyle\lim}
\def\dlimsup{\displaystyle\limsup}
\def\dmax{\displaystyle\max}
\def\dmin{\displaystyle\min}
\def\dinf{\displaystyle\inf}
\def\be{\begin{equation}}
\def\de{\end{equation}}
\def\dint{\displaystyle\int}
\def\dfrac{\displaystyle\frac}
\def\zm{\noindent{\bf  Proof.\ }}
\def\endzm{\quad $\Box$}
\def\mO{\mathcal{O}}
\def\mW{\mathcal{W}}
\def\mL{\mathcal{L}}
\def\LC{\mathcal{L}{\rm Cut}}
\def\proclaim#1{\bigskip\noindent{\bf #1}\bgroup\it\  }
\def\endproclaim{\egroup\par\bigskip}
\baselineskip 18pt

\section{Introduction and main result}
\hspace{0.5cm} Let $M$ be a  $d$-dimensional differentiable manifold carrying
 a complete backwards Ricci flow $\{g_{\tau}\}_{\tau\in [0,T)}$, $0<T\leq \infty$,   i.e. a smooth family of Riemannian metrics solving the nonlinear PDE
\begin{align}\label{Ricci-flow}
\frac{\partial g_{\tau}}{\partial \tau }=2{\rm Ric}_{\tau},
\end{align}
 such that $(M,g_{\tau})$ is complete for all $\tau\in [0,T)$, where ${\rm Ric}_{\tau}$ is the Ricci curvature induced by the metric $g_{\tau}$.
According to Perelman \cite{Pe1},
for $0\leq \tau_1<\tau_2<T$,
 Perelman's
$\mL$-length of a differentiable path $\gamma: [\tau_1, \tau_2]\rightarrow M$ is then
defined  by
$$\mL(\gamma):=\int_{\tau_1}^{\tau_2}\sqrt{\tau}\l[|\dot{\gamma}(\tau)|^2_{\tau}+R(\gamma(\tau),\tau)\r]\vd
\tau,$$
 where $R(x,\tau)$ is the scalar curvature at $x\in M$ w.r.t. the metric $
g_{\tau}$.
Define the $\mathcal{L}$-distance between two points $(x,\tau_1)$ and
$(y,\tau_2)$ by
$$Q(x,\tau_1;y,\tau_2)=\inf\l\{\mL(\gamma)|\gamma:[\tau_1,\tau_2]\rightarrow M\ \mbox{ is smooth and}\ \gamma(\tau_1)=x,\gamma(\tau_2)=y \r\}.$$
Note that the $\mathcal{L}$-distance can be negative, and it is in general not a real distance. But it
reduces to the  Riemannian distance in the sense that
$$\lim_{\tau_2\downarrow
\tau_1}2(\sqrt{\tau_2}-\sqrt{\tau_1})Q(x,\tau_1;y,\tau_2)={\rho}^2_{\tau_1}(x,y),$$
 where $\rho_{\tau_1}$ is the Riemmannian distance with respect to
$g_{\tau_1}$.

In this paper, we want to use the comparison theorem to analyze the behavior
of the $g_t$-Brownian motion.
 Let $\nabla^{t}$ and $\Delta_t$ be the Levi-Civita connection and the  Laplace operator associated with the metric $g_t$ respectively.  Let $\mathcal{F}(M)$ (resp. $\mathcal{O}_{t}(M)$) be the
(resp. $g_t$-orthonormal) frame bundle.  Let $\mathbf{p}:
\mathcal{F}(M)\rightarrow M$ be  the canonical projection.  Set $(e_i)_{i=1}^{d}$ be the orthonormal basis on
$\mathbb{R}^d$. For  $t\in [0,T)$ and $u\in \mathcal{O}_t(M)$, let  $H_i(t, u)$ be the $\nabla^{t}$-horizontal lift of $ue_i$ and $(V_{\alpha,\beta}(u))_{\alpha,\beta=1}^{d}$  the canonical vertical
vector fields. Let $(B_t)_{t\geq 0}$ be a standard $\mathbb{R}^d$-valued
Brownian motion on  a complete
filtered probability space $(\Omega,\{\mathscr{F}_t\}_{t\geq 0}, \mathbb{P})$. In this situation, Arnaudon, Coulibaly and
Thalmaier \cite{ACT} constructed the horizontal Brownian motion on $\mathcal{F}(M)$ by solving
the  following Stratonovich SDE
$$\begin{cases}
 \vd u_t=\sqrt{2}\dsum_{i=1}^{d}H_{i}(t, u_t)\circ \vd
B_t^{i}-\frac{1}{2}\dsum_{\alpha,
\beta=1}^{d}\mathcal{G}_{\alpha,\beta}(t,u_t)V_{\alpha \beta}(u_t)\vd t,\medskip\\
u_{s_0}\in \mathcal{O}_{s_0}(M), {\bf p}u_{s_0}=x,
\end{cases}$$
where $\mathcal{G}_{\alpha,\beta}(t,u_t):=\partial_tg_t(u_te_{\alpha},u_te_{\beta}),\  \alpha,\beta=1,2,\cdots, d.$
They have shown that the drift term in the equality is essential to ensure  $u_t\in
\mathcal{O}_{t}(M)$ for all $t\in [0,T)$.  Moreover, this process is non-explosive up to $T$ since $g_t$ is the complete backward Ricci flow  (see \cite[Theorem 1]{Kuwada}).
The $g_t$-Brownian motion
 is  then defined by $X_t={\bf p} u_t$.
For a given reference point ($\mathcal{L}$-base) $(o,0)$, $o\in M$,
define the radius function
$$Q(x,t):=Q(o,0; x,t),\ \ \mbox{for}\ \ x\in M $$
as the $\mathcal{L}$-distance between $(x,t)$ and $(o,0)$.
If $Q$ is smooth, then by the It\^{o} formula, we have
$$Q(X_t,t)=Q(x,s_0)+\sqrt{2}\int_{s_0}^t\l<\nabla^{t}_1Q(X_s,s),u_s\vd B_s\r>_{s}+\int_{s_0}^t\l[\Delta_{s}Q+{\partial_ s}Q\r](X_s,s)\vd s,\ \ t\geq s_0,$$
where  $\l<\cdot,\cdot\r>_s:=g_s(\cdot,\cdot)$. However, in general, $Q$ is not smooth on whole manifold, so that it is even not clear whether $Q(X_t,t)$ is a semimartingale.  The purpose of this paper is to prove that $Q(X_t,t)$
is indeed a semimartingale and establish the It\^{o} formula for it.

We would like to indicate that when the metric is independent of $t$,  the semimartingale property for the radial part of the Brownian motion w.r.t. Riemannian distance was first proved  by Kendall  \cite{Kendall}, which  is fundamental to analyze the Brownian motion on a Riemannian manifold.  Especially, Kendall's It\^{o} formula was  applied to the construction of  coupling processes on manifolds (see \cite[Chapter 2]{Wbook1}). For the time-inhomogeneous case, Kuwada and Philipowski \cite{Kuwada} shows that  the radial part of Brownian motion $\rho_{t}(o,X_t)$,   the Riemannian distance from $o$ to $X_t$ w.r.t. $g_t$, is a semimartingale, which is applied to the non-explosion of the $g_t$-Brownian motion.
See \cite{Kuwada2} for more discussions in this direction.

  By using an approximation approach to the $\mathcal{L}$-cut-locus, we are able  to extend Kendall's It\^{o} formula to the $g_t$-Brownian motion as follows.
 \bthm\lb{Ito-formula}
Let $X_t$ be a $g_t$-Brownian motion starting at  time $s_0\in (0,T)$. Then there exists a
non-decreasing continuous process $L$ which increases only when
$(X_t, t)\in \mL{\rm Cut}((o,0))$ such that
\be\lb{Ito}
\vd
Q(X_t,t)=\sqrt{2}\l<\nabla^{t}_1Q(X_t, t), \vd B_t\r>_{t}+\l[\Delta_{t}Q+{\partial_ t}Q\r](X_t,t)\vd
t-\vd L_t,\ t\in (s_0,T),
\de where $\nabla ^{t}_1Q(\cdot,t)$ and
$\Delta_{t}Q(\cdot,t)$ are defined to be zero where $Q(\cdot,t)$
fails to be differentiable. In particular, $Q(X_t,t)$ is
semimartingale. \ethm

As an application,  we will construct a coupling  of  $g_{\bar{\tau}_1t}$- and $g_{\bar{\tau}_2t}$-Brownian motions by parallel displacement, where $0<\bar{\tau}_1\leq \bar{\tau}_2<T$ and $0<s\leq t<T/\bar{\tau}_2$.  It is well-known that the coupling method is a useful tool both in stochastic analysis and
geometric analysis.
   We will use this tool to obtain the
martingale property of $Q(X_{\bar{\tau}_1t},\bar{\tau}_1t;\tilde{X}_{\bar{\tau}_2t},\bar{\tau}_2t)$.
 We would like to point out that
very recently,
Kuwada and Philipowski \cite{Kuwada2} constructed a coupling via approximation by geodesic random work, and  applied it to proving the monotonicity of the normalized
$\mL$-transportation cost between solutions of the heat equation.
 Here, we present an alternative construction such that a large number of estimates presented in \cite{Kuwada2} are avoided. When $g_t$ is independent of $t$, our construction is due to to Wang \cite{Wbook1}.

The rest parts of the paper is organized as follows. In Section 2, we introduce the $\mathcal{L}$-cut-locus, and some properties of it.
In  Section 3, we prove  Theorem \ref{Ito-formula}. In the final section,
we construct a coupling of $g_{\bar{\tau}_1t}$- and $g_{\bar{\tau}_2t}$-Brownian motions by parallel
displacement, which leads to a new proof of  the normalized $\mathcal{L}$-transportation cost inequality introduced by Topping \cite{Topping}.

For readers' convenience,   we will take the same notations as in \cite{Topping}.

\section{Definition and properties of  $\mathcal{L}$-cut-locus}
\hspace{0.5cm}
Recall that $\{g_{\tau}\}_{\tau\in [0,T)}$ is  a complete backwards Ricci flow.
Let
$$\Upsilon=\{(x,\tau_1;y,\tau_2)\ |\ x,y \in M \ \mbox{and}\ 0\leq\tau_1<\tau_2<T \}.$$
Similar to the
Riemannian distance, in general, $Q$  fails to be smooth on some subset
$\mL{\rm Cut}$  defined as follows.
For $\tau_1, \tau_2\in [0,T)$ with $\tau_1<\tau_2$, $x\in M$ and $Z\in T_xM$,
we define the $\mL$-exponential map
$\mL_{\tau_1,\tau_2}\exp_x:T_xM\rightarrow M$ by
$\mL_{\tau_1,\tau_2}\exp_x(Z)=\gamma(\tau_2)$, where $\gamma$ is a
unique $\mL$-geodesic (see Remark \ref{rem1}) starting from $x$ at time $\tau_1$ with the initial
condition
$\lim_{\tau\downarrow\tau_1}\sqrt{\tau}\dot{\gamma}(\tau)=Z$. Note
that the $\mathcal{L}$-geodesic also induces a notation of $\mathcal{L}$-Jacobi fields (see e.g. \cite[Chapter 7]{Ricci}). It is convenient to define
\begin{align*}
\Omega(x,\tau_1;\tau_2)=\bigg\{Z\in T_xM\ \bigg|\ \
\begin{split}&\mbox{$\gamma:[\tau_1,\tau_2]\rightarrow M$ \ defined
by
$\gamma(\tau)=\mL_{\tau_1,\tau}\exp_x(Z)$}\\
&\mbox{ is a unique minimising $\mL$-geodesic}
 \end{split}\ \ \bigg\}.\end{align*}
Let $$\Omega^*(x,\tau_1;T):=\bigcap_{\tau_2\in (\tau_1, T)}\Omega(x,\tau_1;\tau_2).$$ For any $Z\in
T_xM\setminus\Omega^*(x,\tau_1;T)$, let $\bar{\tau}(x,\tau_1;
Z)=\sup\{\tau\in (\tau_1,T)\ |\ Z\in \Omega(x, \tau_1;\tau)\}$. Then,
 the $\mathcal{L}$-cut-locus is defined as follows:
 \begin{displaymath} \mL {\rm
Cut}= \Biggr\{(x,\tau_1; y, \tau')\ \ \Biggr| \
\begin{array}{ll}
x\in M,\tau_1\in [0, T);&\\
 y=\mL_{\tau_1,\tau'}\exp_x(Z)\ \mbox{for some}\
Z\in T_xM\setminus\Omega^*(x,\tau_1;T);& \\
\tau'= \bar{\tau}(x,\tau_1;Z)\in [\tau_1,T)&
\end{array}\hspace{-0.4cm} \Biggr\}.
\end{displaymath}
Let
$$\LC((x,\tau_1))=\{(y,\tau_2)\in M\times (\tau_1,T)\ |\ (x,\tau_1;y,\tau_2)\in \LC\}.$$
The set $\mL{\rm Cut}$ can be decomposed into
two parts: the first consists of points $(x,\tau_1;y,\tau_2)$ such
that there exists more than one minimizing $\mL$-geodesic
$\gamma:[\tau_1,\tau_2]\rightarrow M$ with $\gamma(\tau_1)=x$ and
$\gamma(\tau_2)=y$, and the second is the set of points $(x,\tau_1;
y,\tau_2)$ such that $y$ is conjugate to $x$ (with respect to
$\mL$-Jacobi fields) along a minimizing $\mL$-geodesic $\gamma:
[\tau_1,\tau_2]\rightarrow M$ with
$\gamma(\tau_1)=x,\gamma(\tau_2)=y$.

 The following   important properties about the $\mL$-cut-locus and $Q$  can be found in  \cite[Lemma\ 7.27]{Ricci} and \cite[lemma 2.14]{Ye}.
\begin{proposition}
\lb{Lcut}
\begin{enumerate}
  \item [$(1)$] The two sets $\LC$ and  $\LC((o,0))$ are  closed
 of measure zero in   $\Upsilon$ and $M\times [0,T)$ respectively. Moreover, for any $t\in [0,T)$, the set
 $$\LC_t(o):=\{x\in M: (x,t)\in \LC((o,0))\}$$
 is of measure zero in $M$.
  \item [$(2)$] The function $Q$ is smooth on $\Upsilon\backslash \LC$.
  \item [$(3)$] 
  If we associate to each point $(x,\tau_1;y,\tau_2)\in \Upsilon\backslash
  \LC$ the vector $Z\in \Omega(x,\tau_1;\tau_2)\in T_xM$ for which
  $\mathcal{L}_{\tau_1,\tau_2}\exp_x(Z)=y$, then $Z$ depends smoothly on
  $(x,\tau_1, y,\tau_2)$.
  \item [$(4)$] On $\Upsilon\backslash \LC$, we have
   \begin{align*}
   &\frac{\partial Q}{\partial \tau_1}(x,\tau_1;y, \tau_2)=\sqrt{\tau_1}\l(|\dot{\gamma}(\tau_1)|_{\tau_1}^2-R(x,\tau_1)\r); \nabla^{\tau_1}_1Q(x,\tau_1;
   y,\tau_2)=-2\sqrt{\tau_1}\dot{\gamma}(\tau_1);\\
   &\frac{\partial Q}{\partial \tau_2}(x,\tau_1;y, \tau_2)=\sqrt{\tau_2}\l(|\dot{\gamma}(\tau_2)|_{\tau_2}^2-R(x,\tau_2)\r); \nabla^{\tau_2}_2Q(x,\tau_1;
   y,\tau_2)=2\sqrt{\tau_2}\dot{\gamma}(\tau_2),
  \end{align*}
   where  $\gamma: [\tau_1,\tau_2]\rightarrow
   M$ is the minimizing $\mathcal{L}$-geodesic from $x$ to $y$ and $ \nabla^{\tau_1}_1$ $($resp. $\nabla^{\tau_2}_2$$)$  denotes the gradient with respect to the variable $x$ $($resp. the variable $y$$)$ by using the metric $g_{\tau_1}$ $($resp. $g_{\tau_2}$$)$.
\end{enumerate}
\end{proposition}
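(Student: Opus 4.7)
The plan is to prove parts (2), (3), and (4) together via a single application of the implicit function theorem to the $\mathcal{L}$-exponential map, and then to treat the topological and measure-theoretic claims of (1) by stratifying $\LC$ into a conjugate part and a multiple-minimizer part and applying Sard's theorem together with local hypersurface arguments. For the setup I would work on the smooth bundle $E=\{(x,\tau_1,Z,\tau_2)\,:\,x\in M,\ 0\leq\tau_1<\tau_2<T,\ Z\in T_xM\}$ and consider the smooth map $\Phi:E\to\Upsilon$ sending $(x,\tau_1,Z,\tau_2)$ to $(x,\tau_1;\mathcal{L}_{\tau_1,\tau_2}\exp_x(Z),\tau_2)$, which is smooth because $\mathcal{L}$-geodesics depend smoothly on their data through a second-order ODE.

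The key step is to observe that at any $(x,\tau_1,Z,\tau_2)$ lying over a point of $\Upsilon\setminus\LC$, the partial derivative $D_Z(\mathcal{L}_{\tau_1,\tau_2}\exp_x)$ is an isomorphism. By the standard correspondence, its kernel consists of initial data of $\mathcal{L}$-Jacobi fields along the minimizing $\mathcal{L}$-geodesic $\gamma$ that vanish at both endpoints; since $\gamma(\tau_2)$ is by definition not conjugate to $\gamma(\tau_1)$ off $\LC$, this kernel is trivial. The inverse function theorem then yields a smooth local section of $\Phi$, which is exactly (3). Since $Q=\mathcal{L}(\gamma)$ and $\gamma$ depends smoothly on $Z$, (2) follows at once. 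For (4), I would differentiate $Q$ along one-parameter families of $\mathcal{L}$-geodesics obtained by perturbing one of the four endpoint-or-time arguments while keeping the others fixed; the interior first-variation integral vanishes thanks to the $\mathcal{L}$-geodesic equation, leaving only the boundary contribution of the moving endpoint and, for the time variables, the direct contribution of the changing limit of integration. Matching the boundary value of the variation field to the chosen perturbation then produces the four displayed formulas.

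For (1), the smoothness just established makes $\Upsilon\setminus\LC$ open: any $(x,\tau_1;y,\tau_2)\notin\LC$ lies in the image of a local diffeomorphism of $\Phi$, so nearby target points still admit a unique non-conjugate minimizing $\mathcal{L}$-geodesic and hence lie off $\LC$. For the measure-zero claim I would decompose $\LC((o,0))$ into the first $\mathcal{L}$-conjugate locus, which is the image under the smooth map $(Z,\tau)\mapsto(\mathcal{L}_{0,\tau}\exp_o(Z),\tau)$ of the critical set of that map and therefore has measure zero by Sard's theorem, and the multiple-minimizer locus, which is locally the zero set of the difference of two smooth local $\mathcal{L}$-distance branches whose gradients differ (as can be read off from (4) applied to the two distinct $\dot\gamma$'s), hence is a countable union of smooth hypersurfaces. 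The same reasoning applied to $\Phi$ globally yields measure zero for $\LC$ in $\Upsilon$.

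The slicewise statement that $\LC_t(o)$ has measure zero in $M$ for every fixed $t$ is the main obstacle, because it does not follow from Fubini applied to the global conclusion. I would handle it by repeating the stratification at fixed $t$: Sard's theorem for the single map $\mathcal{L}_{0,t}\exp_o:T_oM\to M$ between equidimensional manifolds gives measure zero for the conjugate part on the slice, and the multiple-minimizer part on the slice is again a countable union of smooth hypersurfaces in $M$ by the same gradient-comparison argument from (4). Combining the slicewise conclusions with the global ones gives (1) in full.
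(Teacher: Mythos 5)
Note first that the paper does not actually prove Proposition \ref{Lcut}: it is quoted verbatim from \cite[Lemma~7.27]{Ricci} and \cite[Lemma~2.14]{Ye}, so any argument you write is necessarily a ``different route.'' Your plan for parts (2)--(4) is the standard one and is sound: off $\LC$ the differential $D_Z\bigl(\mathcal{L}_{\tau_1,\tau_2}\exp_x\bigr)$ has trivial kernel (an element of the kernel is the initial datum of an $\mathcal{L}$-Jacobi field vanishing at both ends), so the inverse function theorem gives (3), hence (2), and the first-variation formula for $\mathcal{L}$ with the boundary/endpoint bookkeeping gives (4).

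The gaps are in (1). For closedness you argue that a point of $\Upsilon\setminus\LC$ has a neighbourhood contained in $\Upsilon\setminus\LC$ because nearby targets are hit by the locally inverted exponential map. But that only rules out a \emph{nearby} competing minimiser: it does not exclude a sequence $(x_n,\tau_1^n;y_n,\tau_2^n)\to(x,\tau_1;y,\tau_2)$ whose second minimising $\mathcal{L}$-geodesics $\tilde\gamma_n$ stay far from $\gamma$. One still needs an Arzel\`a--Ascoli/compactness step to extract a limit $\tilde\gamma$, observe it is minimising to $y$, invoke the uniqueness at the limit point to force $\tilde\gamma=\gamma$, and then contradict the assumed separation of $\tilde\gamma_n$ from $\gamma$; as written, openness of $\Upsilon\setminus\LC$ is not established. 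Likewise, for the measure-zero claim you split $\LC_t(o)$ into a conjugate part (handled correctly by Sard) and a multiple-minimiser part, which you assert is ``a countable union of smooth hypersurfaces.'' That assertion uses, without proof, that at a non-conjugate multiple-minimiser point the set of minimising initial data is \emph{finite} (so that $Q$ is locally a finite minimum of smooth branches $Q_1,\dots,Q_k$ with pairwise distinct gradients, and the locus lies in $\bigcup_{i\ne j}\{Q_i=Q_j\}$). Finiteness needs its own compactness argument: non-conjugacy makes the initial-velocity set discrete, and compactness of the space of minimisers of fixed $\mathcal{L}$-length makes it finite. You also need to say why the conjugate part absorbs any multiple-minimiser point at which one of the minimisers is conjugate. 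These are fillable gaps and the overall stratification strategy is reasonable, but the proof of (1) is incomplete as stated. (Minor remark: the paper's displayed formula for $\partial Q/\partial\tau_2$ appears to carry a sign typo and should read $\sqrt{\tau_2}\bigl(R(y,\tau_2)-|\dot\gamma(\tau_2)|^2_{\tau_2}\bigr)$; your first-variation derivation will produce this, not the version printed in the statement.)
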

 Since $(X_t)$ is generated by a non-degenerated operator, the following  is a direct consequence of Proposition \ref{Lcut}(1).
 \blem\lb{pmea0} Suppose $X_t$ is a $g_t$-Brownian motion starting at time $s_0\in (0,T)$. The set $\{t\in
[s_0,T)\ |\ (X_t,t)\in \LC((o,0))\}$ has Lebesgue measure zero almost
surely. \elem
\begin{proof}
According to \cite[Lemma 2]{Kuwada},  for  any starting point $x\in M$, the law of $X_t$ under
$\mathbb{P}^x$ is absolutely continuous with respect to the $g_t$-Riemannian volume
measure. Moreover, by Proposition \ref{Lcut}(1), $\LC_t(o)$ measures zero, we have
\begin{align*}
    \mathbb{E}^{x}\l[\int_{s_0}^T1_{\{(X_t,t)\in \LC((o,0))\}}\vd t\r]=\int_{s_0}^T\mathbb{P}^x((X_t,t)\in \LC(o,0))\vd t=\int_{s_0}^T\mathbb{P}^x(X_t\in \LC_t(o))\vd t=0,
\end{align*}
it follows that $\int_{s_0}^T1_{\{(X_t,t)\in \LC(o,0)\}}\vd t=0, $ a.s..

\end{proof}


\section{Proof of Theorem \ref{Ito-formula}}
\hspace{0.5cm}
Since $X_t$ is non-explosive  before the life time of the metric family,  by a localization argument, it is sufficient to consider the case of compact $M$. Thus, in this section, we  assume that $M$ is compact and
 $[0,T]$ is a finite interval.
 We first state the It\^{o}
formula for smooth functions.
\begin{lemma}\label{obsence-L-cut-locus} Suppose $X_t$ is a $g_t$-Brownian motion starting at time $s_0\in (0,T)$.
Let $f$ be a smooth function on $M\times [s_0,T]$. Then,
\begin{align*}
\vd f(X_t,t)=\frac{\partial f}{\partial t}(X_t,t)\vd t+\Delta_{t}f(X_t,t)\vd t+\sqrt{2}\sum_{i=1}^{d}u_te_if(X_t,t)\vd B_t^i,\ \ s_0<t\leq T.
\end{align*}
\end{lemma}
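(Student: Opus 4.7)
The plan is to lift $f$ to a smooth function $\tilde f$ on $\mathcal{F}(M)\times[s_0,T]$ by setting $\tilde f(u,t)=f(\mathbf{p}u,t)$, apply the classical Stratonovich change-of-variable formula to $\tilde f(u_t,t)$ using the defining SDE of the horizontal lift $u_t$, and then collapse the resulting expression back down to $M$ using two standard geometric facts about the $g_t$-frame bundle.

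First I would substitute the SDE for $u_t$ into the Stratonovich chain rule to obtain
\begin{align*}
\vd\tilde f(u_t,t)=\partial_t\tilde f(u_t,t)\,\vd t+\sqrt 2\sum_{i=1}^d H_i(t,u_t)\tilde f\circ\vd B_t^i-\tfrac{1}{2}\sum_{\alpha,\beta}\mathcal{G}_{\alpha,\beta}(t,u_t)\,V_{\alpha\beta}(u_t)\tilde f\,\vd t.
\end{align*}
Since $\tilde f$ depends only on the base point $\mathbf{p}u$, it is constant along each fibre, so every vertical vector field annihilates it: $V_{\alpha\beta}\tilde f\equiv 0$. This kills the whole drift correction that was introduced to keep $u_t$ in $\mathcal{O}_t(M)$, and is the key point where the $t$-dependence of the metric is absorbed.

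Next I would convert the Stratonovich integral to It\^o form. Applying It\^o's formula to $H_i\tilde f(u_t,t)$ using the same SDE (and again discarding the vertical part), one reads off the cross-variation $\vd[H_i\tilde f(u_\cdot,\cdot),B^i]_t=\sqrt 2\,H_i^2\tilde f(u_t,t)\,\vd t$, so the Stratonovich-to-It\^o correction contributes $\sum_{i=1}^d H_i^2\tilde f(u_t,t)\,\vd t$. The decisive geometric identity is then that for every $u\in\mathcal{O}_t(M)$ and every smooth $h$ on $M$,
\begin{equation*}
\sum_{i=1}^d H_i(t,u)^2(h\circ\mathbf{p})=(\Delta_t h)\circ\mathbf{p},
\end{equation*}
which expresses that the horizontal lift of $\Delta_t$ is computed through any $g_t$-orthonormal frame; this is where the constraint $u_t\in\mathcal{O}_t(M)$ is actually used. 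Combined with the defining property $H_i(t,u_t)\tilde f=(u_te_i)f(X_t,t)$ of the horizontal lift, one arrives at the stated formula.

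The only genuinely subtle point I anticipate is keeping track of how the $t$-dependence of the Levi-Civita connection and of the orthogonality constraint interact in the chain rule. However, once one observes that $\tilde f$ is invariant under the right $O(d)$-action on fibres, the vertical drift of the SDE simply drops out, and the computation reduces to the classical Eells--Elworthy--Malliavin argument with the fixed metric replaced by $g_t$ at each instant. Everything else is bookkeeping of constants arising from the factor $\sqrt 2$ in the definition of $g_t$-Brownian motion.
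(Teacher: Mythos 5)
Your proof is correct. The paper gives no argument for this lemma (it is stated as a known consequence of the Arnaudon--Coulibaly--Thalmaier construction and of the standard Eells--Elworthy--Malliavin machinery), so there is no paper argument to compare against; what you wrote is exactly that machinery, correctly adapted to the time-dependent metric. The two observations that make the formula close -- that the vertical drift correction acts trivially on $\tilde f=f\circ\mathbf p$, and that $\sum_{i=1}^d H_i(t,u)^2(f\circ\mathbf p)=(\Delta_t f)\circ\mathbf p$ for $u\in\mathcal O_t(M)$ -- are precisely the two places where the $t$-dependence enters, and you isolate both.

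One phrasing is worth tightening because it could mislead. In the It\^o-correction step you say you are ``again discarding the vertical part'' when computing the bracket of $H_i\tilde f(u_t,t)$ against $B^i$. Unlike $\tilde f$, the function $H_i(t,\cdot)\tilde f(\cdot,t)=(\cdot\,e_i)f(\mathbf p\,\cdot\,,t)$ is \emph{not} constant along fibres (it sees the full frame through $ue_i$, not only $\mathbf p u$), so the vertical vector fields $V_{\alpha\beta}$ do not annihilate it. The correct reason the vertical term drops out here is different: in the semimartingale decomposition of $H_i\tilde f(u_t,t)$ it contributes only a bounded-variation drift, which cannot enter the cross-variation with $B^i$. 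Your conclusion $\vd[H_i\tilde f(u_\cdot,\cdot),B^i]_t=\sqrt 2\,H_i^2\tilde f(u_t,t)\,\vd t$ is right, but for a reason distinct from the annihilation argument used one step earlier; spelling that out removes the impression that the same mechanism works twice.
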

According to Proposition \ref{Lcut}(2) and Lemma \ref{obsence-L-cut-locus}, we see that, if $(X_t,t)$ stays away from  the $\mL$-cut-locus of $(o,0)$, then
\begin{align}\label{normalIto}
\vd Q(X_t,t)=\vd
\beta_t+\l[\Delta_{t}Q+\frac{\partial Q}{\partial t}\r](X_t,t)\vd
t,\ t\in [s_0,T],
\end{align}
where $\beta_t$ is the martingale term given by
$$\vd\beta_t:=\sqrt{2}\sum_{i=1}^d (u_se_i)Q(X_t,t)\vd B_t^i.$$
 By
Proposition \ref{Lcut}(4), the quadratic variation of the martingale $\beta_t$
is computed  as follows
\begin{align*}
\vd\l<\beta\r>_t=2\sum_{i=1}^{d}[(u_te_i)Q(X_t,t)]^2\vd
t=2|\nabla_1^{t}Q(X_t,t)|_{t}^2\vd t
=8t|\dot{\gamma}(t)|_{t}^2\vd t,
\end{align*}
where $\gamma: [0,t]\rightarrow M$ is the minimal $\mL$-geodesic
from $o$ to $X_t$ and $|\cdot|_t:=\sqrt{\l<\cdot,\cdot\r>_t}$. Thus, $\beta_t$ can be represented by
\begin{equation}\label{martingale}
   2\sqrt{2t}|\dot{\gamma}(t)|_{t} \vd b_t,
\end{equation}
 where $b_t$ is a
standard one-dimensional Brownian motion. We will explain in Remark \ref{rem1} that the coefficient $2\sqrt{2t}|\dot{\gamma}(t)|_{t}$ is not a constant, which is  different from the fixed metric case.

    Next, to control the drift term of \eqref{normalIto}, we need the comparison theorem, which is a combination of the following two lemmas (see \cite[Lemma\ 7.45]{Ricci} and \cite[Lemma\ 7.13]{Ricci}).
\begin{lemma}[\cite{Ricci}]\label{comparison-lemma}
For $0\leq \tau_1<\tau_2\leq T$, let $\gamma:[\tau_1,\tau_2]\rightarrow M$ be a minimal $\mathcal{L}$-geodesic from $p$
to $q$. At $(q,\tau_2)$ the $\mathcal{L}$-distance satisfies
\begin{equation*}
    \frac{\partial}{\partial \tau_2}Q(p,\tau_1; q,\tau_2)+\Delta_{\tau_2}Q(p,\tau_1;\cdot,\tau_2)(q)\leq
    \frac{d}{\sqrt{\tau_2}-\sqrt{\tau_1}}-\frac{1}{2(\tau_2-\tau_1)}Q(p,\tau_1;q,\tau_2).
\end{equation*}
\end{lemma}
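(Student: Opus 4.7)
My approach is the standard second-variation argument for Perelman's $\mathcal{L}$-functional. Proposition~\ref{Lcut}(4) already supplies the $\tau_2$-derivative as $\partial_{\tau_2}Q=\sqrt{\tau_2}(|\dot\gamma(\tau_2)|_{\tau_2}^2-R(q,\tau_2))$, so the entire task is to bound the Laplacian $\Delta_{\tau_2}Q(p,\tau_1;\cdot,\tau_2)(q)$ from above. I may assume $(p,\tau_1;q,\tau_2)\notin\LC$ so that $Q$ is smooth in a neighbourhood of $q$ and the minimising $\mathcal{L}$-geodesic $\gamma$ is unique; the inequality at general points then follows in the barrier sense, by using a smooth upper-supporting surrogate built from a slightly perturbed base and passing to the limit.

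Fix a $g_{\tau_2}$-orthonormal basis $\{E_i\}_{i=1}^d$ of $T_qM$. For each $i$ I construct a variation field $Y_i(\tau)$ along $\gamma$ with $Y_i(\tau_1)=0$ and $Y_i(\tau_2)=E_i$; because $\gamma$ is minimising,
$$
\operatorname{Hess}_{\tau_2}Q(p,\tau_1;\cdot,\tau_2)(q)(E_i,E_i)\le \mathcal{L}''(Y_i,Y_i),
$$
where $\mathcal{L}''$ is the index form of the $\mathcal{L}$-functional along $\gamma$, and summing over $i$ gives $\Delta_{\tau_2}Q(q)\le\sum_{i=1}^d\mathcal{L}''(Y_i,Y_i)$. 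The efficient choice, due to Perelman, is $Y_i(\tau)=\sqrt{\tau/\tau_2}\,\widetilde E_i(\tau)$, where $\widetilde E_i(\tau)$ is obtained from $E_i$ by the \emph{modified} parallel transport along $\gamma$ whose $\tau$-derivative equals $-\operatorname{Ric}_\tau(\widetilde E_i,\cdot)^{\sharp}$, so that $|\widetilde E_i|_\tau\equiv 1$ under the time-dependent metric. The $\sqrt{\tau/\tau_2}$ scaling is precisely what makes the kinematic part of the second variation integrate to the clean expression $d/(\sqrt{\tau_2}-\sqrt{\tau_1})$, while the surviving curvature pieces assemble into a multiple of Hamilton's trace Harnack quantity $\partial_\tau R+R/\tau+2\langle\nabla R,\dot\gamma\rangle+2\operatorname{Ric}_\tau(\dot\gamma,\dot\gamma)$ integrated along $\gamma$.

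The final step invokes the $\mathcal{L}$-geodesic equation $\nabla^\tau_{\dot\gamma}\dot\gamma=\tfrac12\nabla^\tau R-\tfrac{1}{2\tau}\dot\gamma-2\operatorname{Ric}_\tau(\dot\gamma,\cdot)^{\sharp}$ together with the contracted second Bianchi identity to recognise this Harnack integrand as $-\tfrac{\vd}{\vd\tau}\bigl(\sqrt{\tau}(|\dot\gamma|_\tau^2+R)\bigr)$ modulo explicit boundary contributions. The boundary evaluation at $\tau=\tau_2$ pairs with $\partial_{\tau_2}Q$ to clear the $|\dot\gamma(\tau_2)|_{\tau_2}^2$ and $R(q,\tau_2)$ terms, while the remaining integral, after extracting the standard weight $1/(2(\tau_2-\tau_1))$ in the manner of the classical Laplacian-comparison proof, reduces to $Q(p,\tau_1;q,\tau_2)=\int_{\tau_1}^{\tau_2}\sqrt{\tau}(|\dot\gamma|_\tau^2+R)\vd\tau$ with precisely the required coefficient.

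I expect the principal obstacle to be this last bookkeeping step: curvature terms emerge from three separate sources (the pointwise second variation, the time evolution of the modified parallel transport, and the $\sqrt{\tau/\tau_2}$ scaling), and they must combine algebraically so that only the prefactors $d/(\sqrt{\tau_2}-\sqrt{\tau_1})$ and $-1/(2(\tau_2-\tau_1))$ survive. Everything else is routine Ricci-flow differential geometry and standard index-form manipulations of the kind carried out in \cite[Chapter~7]{Ricci}.
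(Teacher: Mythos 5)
The paper does not prove this lemma: it is quoted directly from \cite[Lemma 7.45]{Ricci}, so there is no in-paper argument to compare against, and your task is to reconstruct the cited proof from scratch. Your overall template --- bounding $\Delta_{\tau_2}Q$ by the index form of $\mathcal{L}$ applied to variation fields built from the norm-preserving space-time parallel transport (the same transport that later appears in Definition~\ref{def1}), and then folding curvature residuals back into $Q$ via the $\mathcal{L}$-geodesic equation and a Harnack-type integrand --- is indeed the standard route from \cite[Chapter 7]{Ricci}, so the outline is sound.

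However, the specific variation field you write down is incorrect whenever $\tau_1>0$, which is precisely the case the lemma must cover. You correctly state that the index-form comparison requires $Y_i(\tau_1)=0$ (competitor paths must still emanate from $p$), but your choice $Y_i(\tau)=\sqrt{\tau/\tau_2}\,\widetilde E_i(\tau)$ gives $Y_i(\tau_1)=\sqrt{\tau_1/\tau_2}\,\widetilde E_i(\tau_1)\neq 0$ when $\tau_1>0$, so the inequality $\operatorname{Hess}_{\tau_2}Q(E_i,E_i)\le\mathcal{L}''(Y_i,Y_i)$ is not justified. Quantitatively, with $f(\tau)=\sqrt{\tau/\tau_2}$ the kinematic term $\sum_{i}\int_{\tau_1}^{\tau_2}2\sqrt{\tau}\,|f'(\tau)|^2\,\vd\tau$ evaluates to $d(\sqrt{\tau_2}-\sqrt{\tau_1})/\tau_2$, not the required $d/(\sqrt{\tau_2}-\sqrt{\tau_1})$. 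The scaling that works for general $\tau_1$ is
\begin{equation*}
Y_i(\tau)=\frac{\sqrt{\tau}-\sqrt{\tau_1}}{\sqrt{\tau_2}-\sqrt{\tau_1}}\,\widetilde E_i(\tau),
\end{equation*}
which vanishes at $\tau_1$, equals $E_i$ at $\tau_2$, and whose kinematic integral yields exactly $d/(\sqrt{\tau_2}-\sqrt{\tau_1})$; the two choices agree only when $\tau_1=0$. The gap is not academic here: Lemma~\ref{semimartingale-V} applies the comparison inequality with base point $(\tilde o,\tilde t)$, $\tilde t=t_0/2>0$, so the lemma is genuinely invoked with $\tau_1>0$ and your variation field would not prove the estimate that the paper needs.
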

Since $M$ is compact,  there exists some constant $C_0<\infty$ such that
\begin{equation}\label{ineq1}
    \max_{(x,\tau)\in M\times [0,T]}|{\rm Rm}_{\tau}|(x)\vee|{\rm Ric}_{\tau}|(x)\leq C_0.
\end{equation}
We obtain the  lower bound for $Q$ from \eqref{ineq1}.
\begin{lemma}[\cite{Ricci}]\label{estimation-of-Q}
For $0\leq \tau_1<\tau_2\leq T$, let $\gamma:[\tau_1,\tau_2]\rightarrow M$ be a minimal $\mathcal{L}$-geodesic. Then,
\begin{align*}
Q(\gamma(\tau_1),\tau_1; \gamma(\tau_2),\tau_2) \geq \frac{e^{-2C_0(\tau_2-\tau_1)}}{2(\sqrt{\tau_2}-\sqrt{\tau_1})}\rho_{\tau_1}(\gamma(\tau_1),\gamma(\tau_2))^2-&\frac{2}{3}dC_0(\tau_2^{3/2}-\tau_1^{3/2}). 
\end{align*}

\end{lemma}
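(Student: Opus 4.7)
The plan is to exploit the definition $\mathcal{L}(\gamma)=\int_{\tau_1}^{\tau_2}\sqrt{\tau}\bigl[|\dot\gamma(\tau)|_\tau^2+R(\gamma(\tau),\tau)\bigr]\vd\tau$ and to bound the kinetic and curvature pieces separately, using only the uniform bound \eqref{ineq1}. Since $\gamma$ is a minimising $\mathcal{L}$-geodesic, $Q(\gamma(\tau_1),\tau_1;\gamma(\tau_2),\tau_2)=\mathcal{L}(\gamma)$, and because $|R|\leq d\,C_0$ the curvature term contributes at worst
$$-\int_{\tau_1}^{\tau_2}\sqrt{\tau}\,dC_0\,\vd\tau=-\tfrac{2}{3}dC_0(\tau_2^{3/2}-\tau_1^{3/2}),$$
which is exactly the second term in the claim. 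Thus everything reduces to showing
$$\int_{\tau_1}^{\tau_2}\sqrt{\tau}\,|\dot\gamma(\tau)|_\tau^2\,\vd\tau\ \geq\ \frac{e^{-2C_0(\tau_2-\tau_1)}}{2(\sqrt{\tau_2}-\sqrt{\tau_1})}\,\rho_{\tau_1}(\gamma(\tau_1),\gamma(\tau_2))^2.$$

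For the kinetic integral I would first apply the Cauchy--Schwarz inequality against the weight $1/\sqrt{\tau}$, using the identity $\int_{\tau_1}^{\tau_2}\tau^{-1/2}\vd\tau=2(\sqrt{\tau_2}-\sqrt{\tau_1})$, to obtain
$$\left(\int_{\tau_1}^{\tau_2}|\dot\gamma(\tau)|_\tau\,\vd\tau\right)^{2}\leq 2(\sqrt{\tau_2}-\sqrt{\tau_1})\int_{\tau_1}^{\tau_2}\sqrt{\tau}\,|\dot\gamma(\tau)|_\tau^2\,\vd\tau.$$
This already produces the factor $1/(2(\sqrt{\tau_2}-\sqrt{\tau_1}))$; what remains is to compare the $g_\tau$-length of $\gamma$ with the $g_{\tau_1}$-distance between its endpoints.

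The second ingredient is a metric comparison along the flow. From the backwards Ricci flow equation \eqref{Ricci-flow} and the Ricci bound \eqref{ineq1}, $|\partial_\tau g_\tau(v,v)|=|2\mathrm{Ric}_\tau(v,v)|\leq 2C_0\,g_\tau(v,v)$ for any fixed tangent vector $v$, so Gr\"onwall gives
$$e^{-2C_0(\tau-\tau_1)}g_{\tau_1}\leq g_\tau\leq e^{2C_0(\tau-\tau_1)}g_{\tau_1},\qquad\tau\in[\tau_1,\tau_2].$$
Applied to $v=\dot\gamma(\tau)$ this yields $|\dot\gamma(\tau)|_\tau\geq e^{-C_0(\tau-\tau_1)}|\dot\gamma(\tau)|_{\tau_1}$, hence
$$\int_{\tau_1}^{\tau_2}|\dot\gamma(\tau)|_\tau\,\vd\tau\ \geq\ e^{-C_0(\tau_2-\tau_1)}\int_{\tau_1}^{\tau_2}|\dot\gamma(\tau)|_{\tau_1}\,\vd\tau\ \geq\ e^{-C_0(\tau_2-\tau_1)}\rho_{\tau_1}(\gamma(\tau_1),\gamma(\tau_2)),$$
the last step being the definition of the Riemannian distance as infimum of lengths. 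Squaring and combining with the Cauchy--Schwarz inequality above delivers the desired lower bound on the kinetic term.

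Adding the curvature estimate and the kinetic estimate completes the proof. The only step requiring genuine thought is the bilateral metric comparison, but it is a standard Gr\"onwall argument from the Ricci-flow equation together with the uniform Ricci bound \eqref{ineq1}; the rest is Cauchy--Schwarz and elementary integration.
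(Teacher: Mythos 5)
Your proof is correct and follows exactly the standard argument behind \cite[Lemma 7.13]{Ricci}, which the paper simply cites: separate the kinetic and scalar-curvature parts of $\mathcal{L}(\gamma)$, bound $|R|\le dC_0$ by tracing \eqref{ineq1}, estimate the kinetic part from below by Cauchy--Schwarz against the weight $\tau^{-1/2}$, and finally convert $g_\tau$-speed into $g_{\tau_1}$-speed via the Gr\"onwall metric-equivalence $e^{-2C_0(\tau-\tau_1)}g_{\tau_1}\le g_\tau$ coming from $\partial_\tau g_\tau = 2\mathrm{Ric}_\tau$ and \eqref{ineq1}. Nothing to add.
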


By Lemmas \ref{comparison-lemma} and \ref{estimation-of-Q}, we have, for $(x,t)\notin \LC((o,0))$,
\begin{align*}
\frac{\partial}{\partial t}Q(x,t)+\Delta_{t}Q(x,t)&\leq \frac{d}{\sqrt{t}}-\frac{1}{2t}Q(x,t)
\leq \frac{d}{\sqrt{t}}+\frac{1}{2t}Q(x,t)^-\leq \frac{d}{\sqrt{t}}+\frac{dC_0}{3t}t^{3/2}.
\end{align*}
Define $V(t):=\frac{4d}{\sqrt{t}}+\frac{dC_0}{2}\sqrt{t}$. It is easy to see  that
\begin{equation}\label{compa}
   \frac{\partial}{\partial t}Q(x,t)+\Delta_{t}Q(x,t)\leq V(t).
\end{equation}

Now, we turn to construct a closed set such that it is disjoint with $\LC$. To this end,  we  consider the product manifold $M\times \mathcal [0,T]$ equipped  with  metric $\vec{g}$:
for $x\in M$ and $t\in [0,T]$,
\begin{align*}
\vec{g}(X,Y)(x,t):=g_{t}(X,Y); \ \vec{g}\l(X,\frac{\vd}{\vd t}\r):=0;\ \vec{g}\l(\frac{\vd}{\vd t},\frac{\vd}{\vd t}\r):=1,\ \
X,Y\in T_xM.
\end{align*}
 Given a path $\gamma: [\tau_1,\tau_2]\rightarrow
M$ with $[\tau_1,\tau_2]\subset [0,T]$ and $\gamma(\tau_1)=x,\
\gamma(\tau_2)=y$. The length of the graph $\tilde{\gamma}:
[\tau_1,\tau_2]\rightarrow M\times [0,T]$, defined by
$\tilde{\gamma}(\tau):=(\gamma(\tau),\tau)$, is given by
$$L_{\vec{g}}(\tilde{\gamma})=\int_{\tau_1}^{\tau_2}\l|\frac{\vd \tilde{\gamma}}{\vd \tau}\r|_{\vec{g}}\vd \tau=\int_{\tau_1}^{\tau_2}\sqrt{\l|\frac{\vd \gamma}{\vd \tau}\r|_{\tau}^2+1}\ \vd \tau,$$
where $|\cdot|_{\vec{g}}$ is the norm w.r.t. the metric $\vec{g}$.
Then the distance between
$(x,\tau_1)$ and $(y,\tau_2)$ can be defined as before, namely,
$${\bf d}_{\vec{g}}(x,\tau_1 ; y,\tau_2):=\inf_{\tilde{\gamma}}L_{\vec{g}}(\tilde{\gamma}).$$
 Let us
define a set $A$ by
 \begin{displaymath}
 A= \Biggr\{(y,\tau_1; z, \tau_2)\in \Upsilon\ \ \Biggr| \
\begin{array}{ll}
y,z\in M, \tau_2\in [s_0, T);&\\
 \tau_1=\tau_2/2;& \\
Q(y,\tau_1;z,\tau_2)+Q(y,\tau_1)=Q(z,\tau_2)&
\end{array}\hspace{-0.4cm} \Biggr\}.
\end{displaymath}
Note that $A$ is closed and hence compact since  $Q(y,\tau_1 ; z,\tau_2)$
is continuous in $(y,\tau_1,z,\tau_2)$. Moreover, for any $(y,\tau_1; z,\tau_2)\in A$, the point
$(y,\tau_1)$ is on a minimal $\mathcal{L}$-geodesic joining $(o,0)$ and $(z,\tau_2)$. In particular, the
symmetry of the $\mathcal{L}$-cut-locus implies that
$$A\cap\LC=\varnothing.$$
Combining this with Proposition \ref{Lcut} (1), we obtain
$$\delta_1:={\bf d}_{\vec{g}}\otimes{\bf d}_{\vec{g}}(A,\LC)>0,$$
where ${\bf d}_{\vec{g}}\otimes {\bf d}_{\vec{g}}$ is a metric on ${\Upsilon}^2$.

The following lemma is essential to the proof of Theorem \ref{Ito-formula}.
\begin{lemma}\label{semimartingale-V}
Let $(x_0,t_0)\in \LC ((o,0))$ and $\delta \in (0,\delta_1)$. Let $(X_t)$ be the $g_t$-Brownian motion starting
from $x_0$ at time $t_0$. Let $\tilde{T}=T\wedge \inf\{t\geq t_0| {\bf d}_{\vec{g}}(x_0,t_0; X_t,t)=\delta\}.$ Then
\begin{align*}
\mathbb{E}\l[Q(X_{t\wedge \tilde{T}},t\wedge \tilde{T})-Q(x_0,t_0)-\int_{t_0}^{\tilde{T}\wedge t}V(s)\vd s\r]\leq 0.
\end{align*}
\end{lemma}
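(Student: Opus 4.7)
The plan is to adapt Kendall's barrier trick to the $\mL$-setting. Since $(x_0,t_0)\in\LC((o,0))$, $Q$ is not smooth at $(x_0,t_0)$, so I will construct a smooth upper barrier $\tilde Q$ that coincides with $Q$ at $(x_0,t_0)$ and dominates it on an open neighbourhood of the stopped trajectory $\{(X_s,s):s\in[t_0,\tilde T]\}$. Compactness of $M$ yields a minimising $\mL$-geodesic $\gamma:[0,t_0]\to M$ from $(o,0)$ to $(x_0,t_0)$; set $y:=\gamma(t_0/2)$. Any piecewise-smooth path from $o$ to $x$ that is forced to pass through $(y,t_0/2)$ has $\mL$-length at least $Q(y,t_0/2)+Q(y,t_0/2;x,t)$, so
$$\tilde Q(x,t):=Q(y,t_0/2)+Q(y,t_0/2;x,t)\ge Q(x,t),\qquad t\ge t_0/2,$$
with equality at $(x_0,t_0)$ because subarcs of minimising $\mL$-geodesics are minimising. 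In particular $(y,t_0/2;x_0,t_0)\in A$.

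Since $A\cap\LC=\varnothing$ with $({\bf d}_{\vec g}\otimes{\bf d}_{\vec g})$-distance at least $\delta_1>\delta$, and ${\bf d}_{\vec g}((x_0,t_0),(X_s,s))\le\delta$ for $s\in[t_0,\tilde T]$, the triple $(y,t_0/2;X_s,s)$ stays at $({\bf d}_{\vec g}\otimes{\bf d}_{\vec g})$-distance at least $\delta_1-\delta>0$ from $\LC$. By Proposition \ref{Lcut}(2) the barrier $\tilde Q$ is then $C^\infty$ on an open neighbourhood of the stopped trajectory, and Lemma \ref{obsence-L-cut-locus} applied to $\tilde Q$ gives
\begin{align*}
\tilde Q(X_{t\wedge\tilde T},t\wedge\tilde T)-\tilde Q(x_0,t_0)=N_{t\wedge\tilde T}+\int_{t_0}^{t\wedge\tilde T}\l[\partial_s\tilde Q+\Delta_s\tilde Q\r](X_s,s)\,\vd s,
\end{align*}
with $N$ a true martingale since $\tilde Q$ has bounded derivatives on the neighbourhood in question.

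To bound the drift, apply Lemma \ref{comparison-lemma} at $(X_s,s)$ with base point $(y,t_0/2)$ and use Lemma \ref{estimation-of-Q} to obtain $Q(y,t_0/2;X_s,s)\ge -\tfrac{2}{3}dC_0\bigl(s^{3/2}-(t_0/2)^{3/2}\bigr)$, which yields
$$\partial_s\tilde Q+\Delta_s\tilde Q\le\frac{d}{\sqrt s-\sqrt{t_0/2}}+\frac{dC_0}{3}\cdot\frac{s^{3/2}-(t_0/2)^{3/2}}{s-t_0/2}.$$
Two elementary inequalities, $(\sqrt s-\sqrt{t_0/2})^{-1}\le 4/\sqrt s$ (valid for $s\ge t_0$ because $\sqrt{t_0/2}\le \tfrac{3}{4}\sqrt s$) and $(s^{3/2}-(t_0/2)^{3/2})/(s-t_0/2)\le\tfrac{3}{2}\sqrt s$ (mean value theorem), bound the right-hand side by $V(s)=\tfrac{4d}{\sqrt s}+\tfrac{dC_0}{2}\sqrt s$. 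Taking expectations kills $N$; combining with $Q\le\tilde Q$ and $Q(x_0,t_0)=\tilde Q(x_0,t_0)$ yields the conclusion. The only delicate step is this last algebraic reduction: Lemma \ref{comparison-lemma} has natural ``time origin'' $t_0/2$, whereas $V(s)$ has origin $0$, and the choice $\tau_1=\tau_2/2$ in the definition of $A$ is precisely what makes the two elementary inequalities hold uniformly for $s\ge t_0$.
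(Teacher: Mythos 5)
Your proof matches the paper's argument step for step: choose the midpoint $(y,t_0/2)$ on a minimising $\mL$-geodesic from $(o,0)$ to $(x_0,t_0)$ (the paper's $(\tilde o,\tilde t)$), define the barrier $\tilde Q = Q(y,t_0/2) + Q(y,t_0/2;\,\cdot\,,\cdot)$ (the paper's $Q^+$), use membership in $A$ together with $\delta<\delta_1$ to keep the stopped trajectory away from $\LC((y,t_0/2))$, apply Lemma \ref{obsence-L-cut-locus} to the now-smooth barrier, and bound the drift by Lemmas \ref{comparison-lemma} and \ref{estimation-of-Q}; you even spell out the two elementary inequalities reducing the drift bound to $V(s)$, which the paper states without justification. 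This is essentially the paper's own proof, written out in slightly more detail.
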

\begin{proof}
We construct a point $\tilde{o}$ as follows: we choose a minimizing $\mathcal{L}$-geodesic
$\gamma$ from $(o,0)$ to $(x_0,t_0)$ and choose $(\tilde{o},\tilde{t})$ on $\tilde{\gamma}$
such that $\tilde{t}=t_0/2$.
Then by the construction of $A$, we have $(\tilde{o},\tilde{t};x_0,t_0)\in A$. Moreover,
for all $t\in [t_0,\tilde{T}]$, we have
$${\bf d}_{\vec{g}}\otimes {\bf d}_{\vec{g}}((\tilde{o},\tilde{t}; x_0,t_0),(\tilde{o},\tilde{t};X_t,t))={\bf d}_{\vec{g}}((x_0,t_0),(X_t,t))<\delta_1,$$
which implies $(X_t,t)\notin \LC((\tilde{o},\tilde{t}))$. Let
$$Q^+(x,t)=Q(o,0;\tilde{o},\tilde{t})+Q(\tilde{o},\tilde{t};x,t).$$
Since $\tilde{o}$ lies on a minimizing $\mathcal{L}$-geodesic from $(o,0)$ to $(x_0,t_0)$,
we have $Q^+(x_0,t_0)=Q(x_0,t_0)$.
Moreover, by the triangle inequality,
$$Q^+(x,t)\geq Q(x,t),\ \ \mbox{for all}\ (x,t)\in M\times [0,T].$$
According to Lemmas \ref{comparison-lemma} and  \ref{estimation-of-Q},
$$\l(\Delta_tQ^++\frac{\partial Q^+}{\partial t}\r)(x,t)\leq \frac{d}{\sqrt{t}-\sqrt{t_0/2}}+\frac{dC_0}{3({t}-{t_0/2})}[t^{3/2}-(t_0/2)^{3/2}]\leq \frac{4d}{\sqrt{t}}+\frac{dC_0}{2}\sqrt{t}=V(t)$$
holds if $(\tilde{o},\tilde{t};x,t)\notin \LC$. Then,
\begin{align*}
&Q(X_{t\wedge \tilde{T}},t\wedge \tilde{T})-Q(X_{t_0},t_0)-\int_{t_0}^{t\wedge \tilde{T}}V(s)\vd s\\
&\leq Q^+(X_{t\wedge \tilde{T}},t\wedge \tilde{T})-Q^+(X_{t_0},t_0)-\int_{t_0}^{t\wedge \tilde{T}}V(s)\vd s\\
&\leq  Q^+(X_{t\wedge \tilde{T}},t\wedge \tilde{T})-Q^+(X_{t_0},t_0)-\int_{t_0}^{t\wedge \tilde{T}}\l(\Delta_{s}Q^++\frac{\partial}{\partial s}Q^+\r)(X_s,s)\vd s.
\end{align*}
Since $Q^+$ is smooth at $(X_t,t)$ for $t\in [t_0,\tilde{T}]$, the last term is a martingale. Hence the claim follows.
\end{proof}

For $\delta\in (0,\delta_1)$, we define a sequence of stopping times $(S_n^{\delta})_{n\in \mathbb{N}}$
and $(T^{\delta}_n)_{n\in \mathbb{N}}$
\begin{align*}
    &T_0^{\delta}:=s_0;\\
    &S_n^{\delta}:=\{t\geq T_{n-1}^{\delta}| (X_t,t)\in \LC((o,0))\};\\
    &T_n^{\delta}:=T\wedge \inf\{t\geq S_n^{\delta}\ |\ {\bf d}_{\vec{g}}(X_t,t; X_{S_n^{\delta}},S_n^{\delta})=\delta\}.
\end{align*}
Note that these are well-defined because $\LC$ and
$$\{(x,t)\ |\ {\bf d}_{\vec{g}}(x,t; y,s)=\delta,\ s_0\leq s\leq t\leq T, \ y\in M\}$$
 are closed.
\begin{lemma}\label{prop1}
The process $Q(X_t,t)-\int_{s_0}^{t}V(s)\vd s$ is a supermartingale.
\end{lemma}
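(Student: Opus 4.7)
The plan is to split the time axis using the stopping-time sequence $(T_{n-1}^{\delta},S_n^{\delta},T_n^{\delta})_{n\geq 1}$ into two alternating types of intervals: the ``smooth'' intervals $[T_{n-1}^{\delta},S_n^{\delta}]$, on which the graph $(X_s,s)$ never meets $\LC((o,0))$, and the ``recovery'' intervals $[S_n^{\delta},T_n^{\delta}]$, which start on $\LC((o,0))$ but stay within $\vec{g}$-distance $\delta<\delta_1$ of the starting point and are therefore covered by Lemma \ref{semimartingale-V}. Setting $M_t:=Q(X_t,t)-\int_{s_0}^{t}V(s)\vd s$, I will show that $M$ satisfies the supermartingale inequality across each of these intervals and then paste them together using the strong Markov property of the $g_t$-Brownian motion.

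On a smooth interval $[T_{n-1}^{\delta},S_n^{\delta}]$, $Q$ is smooth along the path by Proposition \ref{Lcut}(2), so Lemma \ref{obsence-L-cut-locus} gives the decomposition \eqref{normalIto}, and the comparison inequality \eqref{compa} bounds the drift by $V(t)$. Hence $M$ is a supermartingale on this interval. On a recovery interval $[S_n^{\delta},T_n^{\delta}]$, I apply the strong Markov property at $S_n^{\delta}$ and invoke Lemma \ref{semimartingale-V} with $(x_0,t_0)=(X_{S_n^{\delta}},S_n^{\delta})$. The key observation is that the proof of that lemma does not merely yield a statement about the unconditional expectation: it constructs a smooth function $Q^+$ satisfying $Q^+(x_0,t_0)=Q(x_0,t_0)$, $Q^+\geq Q$ everywhere, and $(\partial_{t}+\Delta_{t})Q^+\leq V(t)$ on the region where $(X_t,t)$ lives before $T_n^{\delta}$. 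Consequently $Q^+(X_t,t)-\int_{s_0}^{t}V(s)\vd s$ is a genuine local supermartingale on $[S_n^{\delta},T_n^{\delta}]$ dominating $M_t$ and agreeing with $M_{S_n^{\delta}}$ at the left endpoint, which yields the conditional supermartingale inequality for $M$ across the interval.

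To paste everything together I need to know that the oscillation count is a.s.\ finite on $[s_0,T]$. Since each recovery interval forces the continuous path $(X_t,t)$ to travel $\vec{g}$-distance exactly $\delta$, and since paths of $X$ are a.s.\ uniformly continuous on the compact product $M\times[s_0,T]$, there exists an a.s.\ finite $N(\omega)$ with $T_{N(\omega)}^{\delta}=T$. Iterating the supermartingale inequalities through $\mathscr{F}_{T_{n-1}^{\delta}}$ and $\mathscr{F}_{S_n^{\delta}}$ across these finitely many intervals then produces $\mathbb{E}[M_t\mid\mathscr{F}_s]\leq M_s$ for all $s_0\leq s\leq t\leq T$.

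The main obstacle is the recovery step: Lemma \ref{semimartingale-V} is stated as an unconditional expectation bound, whereas a supermartingale statement requires a conditional inequality. Overcoming this is precisely the purpose of the majorant $Q^+$ appearing inside the proof of Lemma \ref{semimartingale-V}, whose smoothness on the relevant region upgrades the inequality to a pointwise local supermartingale comparison via the standard It\^o formula of Lemma \ref{obsence-L-cut-locus}. A secondary technical point is the well-definedness and a.s.\ finiteness of the stopping-time sequence, which I handle by the uniform-continuity argument together with Lemma \ref{pmea0} to ensure $S_n^{\delta}>T_{n-1}^{\delta}$ strictly whenever $T_{n-1}^{\delta}<T$.
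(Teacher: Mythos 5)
Your proposal is correct and takes essentially the same route as the paper: split time by the stopping-time sequence $(T_{n-1}^{\delta},S_n^{\delta},T_n^{\delta})$, use \eqref{normalIto} and \eqref{compa} on the smooth intervals and Lemma \ref{semimartingale-V} on the recovery intervals, then close with the uniform-continuity argument that $T_n^{\delta}\to T$. Your explicit upgrade of the unconditional bound in Lemma \ref{semimartingale-V} to a conditional one via the $Q^{+}$ majorant is precisely what the paper's appeal to the strong Markov property achieves implicitly.
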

\begin{proof}
Due to the strong Markov property of the $g_t$-Brownian motion, it suffices to show that
for all deterministic starting point $(x_0,t_0)\in M\times [s_0,T]$ and all $t\in [s_0,T]$,
$$\mathbb{E}\l[Q(X_t,t)-Q(X_{t_0},t_0)-\int_{t_0}^tV(s)\vd s\r]\leq 0.$$
We first observe from Lemma \ref{semimartingale-V} and \eqref{normalIto} that for all $n\in \mathbb{N}$,
\begin{equation*}
    \mathbb{E}\l[Q(X_{t\wedge S_n^{\delta}},t\wedge S_n^{\delta})-Q(X_{t\wedge T_{n-1}^{\delta}},t\wedge T_{n-1}^{\delta})-\int_{t\wedge T_{n-1}^{\delta}}^{t\wedge S_{n}^{\delta}}V(s)\vd s\bigg |\mathscr{F}_{T_{n-1}^{\delta}}\r]\leq 0,
\end{equation*}
and
\begin{equation*}
    \mathbb{E}\l[Q(X_{t\wedge T_n^{\delta}},t\wedge T_n^{\delta})-Q(X_{t\wedge S_n^{\delta}},t\wedge S_n^{\delta})
    -\int_{t\wedge S_n^{\delta}}^{t\wedge T_n^{\delta}}V(s)\vd s\bigg | \mathscr{F}_{S_n^{\delta}}\r]\leq 0.
\end{equation*}
It remains to show that $T_n\rightarrow T$ as $n\rightarrow\infty$. If
$$\lim_{n\rightarrow \infty}T_n=:T_{\infty}<T,$$
then $T_n^{\delta}-S_n^{\delta}$ converges to  0 as $n\rightarrow \infty$. In addition,
${\bf d}_{\vec{g}}(X_{S_n^{\delta}},S_n^{\delta}; X_{T_n^{\delta}},T_n^{\delta})=\delta$ must hold for
infinitely many $n\in \mathbb{N}$. It contradicts to the fact that $X_t$ is uniformly continuous on $[0,T]$.
\end{proof}

\begin{lemma}\label{lem1}
$\lim_{\delta\rightarrow 0}\sum_{n=1}^{\infty}|T_n^{\delta}-S_n^{\delta}|=0$ almost surely.
\end{lemma}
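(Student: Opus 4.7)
The plan is to dominate $\sum_n (T_n^\delta - S_n^\delta)$ by the Lebesgue measure of the times when the space-time trajectory $(X_t,t)$ lies in a $\delta$-neighborhood of $\LC((o,0))$, then shrink this neighborhood and invoke Lemma \ref{pmea0}.

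First I would observe that the intervals $[S_n^\delta, T_n^\delta]$ have pairwise disjoint interiors: by construction $S_{n+1}^\delta \ge T_n^\delta$. Consequently
\[
\sum_{n=1}^{\infty}(T_n^\delta - S_n^\delta) \;=\; \Bigl|\bigcup_{n=1}^{\infty}[S_n^\delta, T_n^\delta]\Bigr|,
\]
where $|\cdot|$ denotes Lebesgue measure on $[s_0,T]$.

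Next, introduce the closed $\delta$-tube around $\LC((o,0))$ in the product metric $\vec g$:
\[
F_\delta := \bigl\{(x,\tau) \in M\times[s_0,T] : {\bf d}_{\vec g}\bigl((x,\tau),\LC((o,0))\bigr) \le \delta\bigr\}.
\]
For every $t\in [S_n^\delta, T_n^\delta]$ we have $(X_{S_n^\delta}, S_n^\delta)\in\LC((o,0))$ by definition of $S_n^\delta$, and ${\bf d}_{\vec g}((X_t,t),(X_{S_n^\delta}, S_n^\delta))\le \delta$ by definition of $T_n^\delta$. Hence $(X_t,t)\in F_\delta$, and therefore
\[
\bigcup_{n=1}^{\infty}[S_n^\delta, T_n^\delta] \;\subset\; \bigl\{t\in [s_0,T] : (X_t,t)\in F_\delta\bigr\},
\]
so $\sum_n (T_n^\delta - S_n^\delta) \le \int_{s_0}^T \mathbf 1_{F_\delta}(X_t,t)\,\vd t$.

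Finally, I would let $\delta\downarrow 0$. Since $\LC((o,0))$ is closed in $M\times[0,T)$ by Proposition \ref{Lcut}(1), the nested family $\{F_\delta\}_{\delta>0}$ satisfies $\bigcap_{\delta>0}F_\delta = \LC((o,0))\cap (M\times[s_0,T])$. Consequently, for each fixed $t$, $\mathbf 1_{F_\delta}(X_t,t)\downarrow \mathbf 1_{\LC((o,0))}(X_t,t)$ as $\delta\downarrow 0$. The integrands are uniformly bounded by $1$ on the finite interval $[s_0,T]$, so dominated convergence (applied pathwise) and Lemma \ref{pmea0} yield
\[
\lim_{\delta\downarrow 0}\int_{s_0}^T \mathbf 1_{F_\delta}(X_t,t)\,\vd t = \int_{s_0}^T \mathbf 1_{\LC((o,0))}(X_t,t)\,\vd t = 0 \quad \text{a.s.},
\]
which combined with the previous bound proves the claim.

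The one point that deserves a little care, rather than a serious obstacle, is the convergence $F_\delta \downarrow \LC((o,0))$: this is just the standard fact that the $\varepsilon$-neighborhoods of a closed set in a complete metric space shrink to the set itself, applied to the complete metric $\vec g$ on $M\times[0,T]$. Everything else is bookkeeping.
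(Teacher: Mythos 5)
Your proof is correct and follows essentially the same route as the paper: bound $\sum_n(T_n^\delta-S_n^\delta)$ by the time spent in a shrinking $\delta$-neighborhood of $\LC((o,0))$, use closedness of $\LC((o,0))$ to shrink it down, and conclude via Lemma \ref{pmea0}. The only cosmetic difference is that you realize the $\delta$-neighborhood as a space--time tube $F_\delta$ in the $\vec g$-metric and apply dominated convergence, whereas the paper works with the time-only set $E_\delta=\{t:\exists\,t',\,|t-t'|\le\delta,\,(X_{t'},t')\in\LC((o,0))\}$ and applies monotone convergence; both exploit exactly the same facts and yield the same bound.
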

\begin{proof}
For $\delta>0$, define
\begin{align*}
    & E_{\delta}=\{t\in [s_0,T]\ |\  \mbox{ there\  exist}\  t'\in [s_0,T] \ \mbox{satisfying} \ |t-t'|\leq \delta\ \mbox{ and}\   (X_{t'},t')\in \LC ((o,0))\},\\
    & E=\{t\in [s_0,T]\ |\ (X_t,t)\in \LC((o,0))\}.
\end{align*}
Since the map $t\rightarrow (X_t,t)$ is continuous and $\LC$ is closed, the set $E$ is closed and hence $E=\bigcap_{\delta>0}E_{\delta}$ holds. By the definitions of $S_n^{\delta}$ and $T_n^{\delta}$,
$$E\subset \bigcup_{n=1}^{\infty}[S_n^{\delta},T_n^{\delta}]\subset E_{\delta},$$
which, together with the monotone convergence theorem, implies
$$\lim_{\delta\rightarrow 0}\sum_{n=1}^{\infty}|T_n^{\delta}-S_n^{\delta}|\leq \lim_{\delta\rightarrow 0}\int_{s_0}^T1_{E_{\delta}}(t)\vd t=\int_{s_0}^T1_E(t)\vd t=0, \ \mbox{a.e.}, $$
 where the last equality comes from Proposition \ref{Lcut}(1).

\end{proof}

\begin{lemma}\label{lem2}
The martingale part of $Q(X_t,t)$ is
$$\l<\nabla_1^{t}Q(X_t, t),u_t\vd B_t\r>_{t}=\sum_{i=1}^du_te_iQ(X_t,t)\vd B_t^i.$$
\end{lemma}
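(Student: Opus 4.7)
The plan is to use the smooth It\^o formula on the good intervals of the approximation scheme from Lemma \ref{lem1} to identify the local martingale part of the continuous semimartingale $Q(X_t,t)$. By Lemma \ref{prop1}, the process $Q(X_t,t)$ admits a unique semimartingale decomposition
$$Q(X_t,t) = Q(X_{s_0},s_0) + M_t + A_t,$$
with $M$ a continuous local martingale and $A$ continuous of locally bounded variation. Setting
$$N_t := \sum_{i=1}^d \int_{s_0}^t (u_se_i)Q(X_s,s)\,\vd B_s^i,$$
with the integrand taken to be $0$ at times where $(X_s,s)\in \LC((o,0))$ (this convention is harmless by Lemma \ref{pmea0}), the task reduces to verifying that $M_t = \sqrt{2}\,N_t$.

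On each good interval $[T_{n-1}^\delta, S_n^\delta]$, the process $(X_s,s)$ stays in $\Upsilon\setminus\LC$, where $Q$ is smooth by Proposition \ref{Lcut}(2). The smooth It\^o formula of Lemma \ref{obsence-L-cut-locus}, combined with the identification $(u_se_i)Q = \l<\nabla_1^s Q, u_se_i\r>_s$, then yields precisely \eqref{normalIto} on the interval. By uniqueness of the semimartingale decomposition on $[T_{n-1}^\delta\wedge t, S_n^\delta\wedge t]$, the local martingale increments of $Q(X_\cdot,\cdot)$ coincide with those of $\sqrt{2}\,N$. Hence the continuous local martingale $Z_t := M_t - \sqrt{2}\,N_t$ is constant on each good interval, and its quadratic variation $[Z]$ can only grow on the union of bad intervals $\bigcup_n[S_n^\delta, T_n^\delta]$.

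To finish, I would show $[Z]_T = 0$, which together with $Z_{s_0} = 0$ forces $Z \equiv 0$. Since $[Z]$ is supported on the bad set, whose total Lebesgue measure in time tends to $0$ as $\delta\to 0$ by Lemma \ref{lem1}, it suffices to prove that $d[Z]_s$ is absolutely continuous with respect to $ds$ with a locally bounded density, and then pass to the limit in $\delta$. The density of $d[N]_s$ is explicit via Proposition \ref{Lcut}(4): $d[N]_s = 2|\nabla_1^s Q|^2\,ds = 8s|\dot\gamma(s)|_s^2\,ds$; the corresponding statement for $d[M]_s$ follows from $dM = \sqrt{2}\,dN$ on good intervals together with standard localization. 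The main technical obstacle is therefore a uniform $L^\infty$ bound on $|\dot\gamma(s)|_s$ along minimizing $\mL$-geodesics from $(o,0)$ to points of the compact space $M\times[s_0,T]$, which can be distilled from Lemma \ref{estimation-of-Q} combined with the $\mL$-length formula and the curvature bound \eqref{ineq1}.
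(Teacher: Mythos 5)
Your overall plan mirrors the paper's: form the difference between the true local-martingale part of $Q(X_t,t)$ and the candidate stochastic integral $N$, show it is constant on the good intervals where $Q$ is smooth, and kill its quadratic variation on the shrinking bad set via Lemma~\ref{lem1}. That is precisely the paper's argument. The genuine gap in your write-up is the step asserting that $d[M]_s$ is absolutely continuous with respect to $ds$ (with locally bounded density). You claim this ``follows from $dM = \sqrt{2}\,dN$ on good intervals together with standard localization,'' but that identity gives information only on the good set; on the bad set $E=\bigcap_\delta I_\delta$ you have, a priori, no control of $[M]$, and a continuous increasing process can in general carry singular continuous mass on a Lebesgue-null set. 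What closes this gap, and what the paper invokes explicitly, is the \emph{martingale representation theorem}: since the underlying filtration is the Brownian one, the local martingale $M$ must be of the form $M_t=\int_{s_0}^t\eta_s\,\vd B_s$ for a predictable, locally square-integrable $\eta$. Then $d[M]_s=|\eta_s|^2\,ds$ is automatically absolutely continuous, and the paper finishes by bounding the $[Z]$-density on bad intervals by $|\eta_s|^2 + C$ (with $C$ from compactness) and applying dominated convergence as $\delta\to 0$; no pointwise $L^\infty$ bound on $|\eta_s|$ is needed, only its $L^1$ integrability, which MRT provides for free.

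Two smaller points. First, the ``uniform $L^\infty$ bound on $|\dot\gamma(s)|_s$'' you flag as the main technical obstacle is, in the paper, a one-line consequence of compactness of $M\times[s_0,T]$ (the constant $C$ in the display before the end of the proof); the real load-bearing ingredient you left implicit is MRT, not the geometric bound. Second, there is a factor-of-two slip: with your definition $N_t=\sum_i\int_{s_0}^t(u_se_i)Q(X_s,s)\,\vd B_s^i$ one has $d[N]_s=|\nabla_1^sQ(X_s,s)|_s^2\,ds=4s|\dot\gamma(s)|_s^2\,ds$, not $2|\nabla_1^sQ|^2\,ds=8s|\dot\gamma(s)|_s^2\,ds$; the extra factor $2$ in the paper's computation comes from the $\sqrt{2}$ already built into $\beta_t$, whereas you put the $\sqrt 2$ into the target identity $M=\sqrt2\,N$ instead. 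This does not affect the logic, but the stated density is off.
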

\begin{proof}
By the martingale representation theorem, there exists an $\mathbb{R}^d$-valued process $\eta$
such that the martingale part of $Q(X_t,t)$ equals to $\int_0^t\eta_s\vd B_s$. Let
$$N_t:= \int_{s_0}^t \eta_s\vd B_s-\sum_{i=1}^d(u_te_i)Q(X_t,t)\vd B_t^i.$$
Using the stopping times $S_n^{\delta}$ and $T_n^{\delta}$, the quadratic variation  $\l<N\r>_T$
of $N$ is expressed as follows:
$$\l<N\r>_T=\sum_{i=1}^d\sum_{n=1}^{\infty}\l(\int_{T_{n-1}^{\delta}\wedge T}^{S_n^{\delta}\wedge T}|\eta_t^i-(u_te_i)Q(X_t,t)|^2\vd t+\int_{S_n^{\delta}\wedge T}^{T_n^{\delta}\wedge T}|\eta_t^i-(u_te_i)Q(X_t,t)|^2\vd t\r).$$
Since $(X_t,t)\notin \LC ((o,0))$ if $ t\in (T_{n-1}^{\delta},S_n^{\delta})$, the It\^{o} formula yields
$$\int_{T_{n-1}^{\delta}\wedge T}^{S_n^{\delta}\wedge T}|\eta^i_t-(u_te_i)Q(X_t,t)|^2\vd t=0$$
for $n\in \mathbb{N}$ and $i=1,2,\cdots,d$. For the second term on the right, since
the manifold is compact, there exits a constant $C>0$, such that
\begin{align*}
 \sum_{i=1}^d\sum_{n=1}^{\infty}\int_{S_{n}^{\delta}\wedge T}^{T_{n}^{\delta}\wedge T}|\eta_t^i-u_te_iQ(X_t,t)|^2\vd t &\leq \int_{\bigcup_{n=1}^{\infty}[S_n^{\delta},T_n^{\delta}]}(|\eta_t|^2+4|t\dot{\gamma}(t)|_t^2)\vd t\\
 &\leq \int_{\bigcup_{n=1}^{\infty}[S_n^{\delta},T_n^{\delta}]}(|\eta_t|^2+C)\vd t.
\end{align*}
Since $\eta_t$ is locally square-integrable on $[s_0,T]$ almost surely, we obtain $\l<N\r>_{T}=0$ by  Lemma  \ref{lem1}, which yields
the conclusion.
\end{proof}

\begin{proof}[{ Proof of Theorem \ref{Ito-formula}}]
 Now, we can  conclude the proof of Theorem \ref{Ito-formula}. Set $I_{\delta}:=\bigcup_{n=1}^{\infty}[S_n^{\delta},T_n^{\delta}]$. Let
\begin{align*}
    L_t^{\delta}:=&-Q(X_t,t)+Q(X_{s_0},s_0)+\sum_{i=1}^{d}\int_{s_0}^t(u_se_i)Q(X_s,s)\vd B_s^i\\
    &+\int_{[s_0,t]\setminus I_{\delta}}\l[\Delta_{s}Q+\frac{\partial Q}{\partial s}\r](X_s,s) \vd s+\int_{[s_0,t]\cap I_{\delta}}V(s)\vd s.
\end{align*}
By \eqref{normalIto},  $L_t^{\delta}$ is a increasing process which increases only when $t\in I_{\delta}$. Moreover, we have
\begin{align}\label{add-1}
    &Q(X_t,t)-Q(X_{s_0},s_0) -\sum_{i=1}^d \int_{s_0}^t(u_se_i)Q(X_s,s)\vd B_s^i-\int_{s_0}^{t}\l[\Delta_{s}Q+\frac{\partial Q}{\partial s}\r](X_s,s)\vd s+L_t^{\delta}\nonumber\\
    =&-\int_{[s_0,t]\setminus I_{\delta}}\l[\Delta_{s}Q+\frac{\partial Q}{\partial s}\r](X_s,s)\vd s-\int_{[s_0,t]\cap I_{\delta}}V(s)\vd s.
\end{align}
From \eqref{compa}, we  obtain
\begin{equation*}
    \l|\int_{[s_0,t]\setminus  I_{\delta}}\l[\Delta_{s}Q+\frac{\partial Q}{\partial s}\r](X_s,s)\vd s+\int_{[s_0,t]\cap I_{\delta}}V(s)\vd s\r|\leq 2\int_{[s_0,t]\cap I_{\delta}}V(s)\vd s,
\end{equation*}
and $V(s)$ is bounded on $[s_0,t]\cap I_{\delta}$. Then by Lemma \ref{lem1}, the right hand of \eqref{add-1} converges to $0$
as $\delta \rightarrow 0$. Thus, $L_t:=\lim_{\delta \downarrow 0}L_t^{\delta}$ exists for all $t\in [s_0,T]$
almost surely and hence \eqref{Ito} holds. Finally,  it is easy to see that $L_t$  increases only when $(X_t,t)\in \LC ((o,0))$ from the corresponding property of $L_t^{\delta}$.
\end{proof}

\section{ Coupling for $g_{\bar{\tau}_1t}$- and $g_{\bar{\tau}_2t}$- Brownian motions}
\hspace{0.5cm}
 First, we introduce  some basic notations concerning the space-time parallel displacement.
\begin{definition}[space-time parallel vector field]\label{def1}
 For $0<\tau_1<\tau_2< T$, let  $\gamma:[\tau_1,\tau_2]\rightarrow M$
be a smooth curve. We say that a vector field $Z$ along $\gamma$ is space-time parallel if
\begin{equation}\label{eq4}
    \nabla^{\tau}_{\dot{\gamma}(\gamma)}Z(\tau)=-{\rm Ric}^\sharp_{\tau}(Z(\tau))
\end{equation}
holds for all $\tau\in [\tau_1,\tau_2]$, where
 ${\rm Ric}^\sharp_{\tau}$ is defined by regarding the $g_{\tau}$-Ricci curvature as a $(1,1)$-tensor.
\end{definition}
Since \eqref{eq4} is a linear first order ODE, for any $\xi\in T_{\gamma(\tau_1)}M$, there exists a unique space-time
parallel vector field $Z$ along $\gamma$ with $Z(\tau_1)=\xi$.
Note that
whenever $Z$ and $Z'$ are space-time parallel vector fields along a curve $\gamma$, their $g_{\tau}$-inner product
is constant in $\tau$:
\begin{align}\label{isometry}
    \frac{\vd}{\vd \tau}\l<Z(\tau),Z'(\tau)\r>_{\tau}&=\frac{\partial}{\partial \tau}g_{\tau}(Z(\tau),Z'(\tau))+\l<\nabla^{\tau}_{\dot{\gamma}(\tau)}Z(\tau),Z'(\tau)\r>_{\tau}+\l<Z(\tau),\nabla^{\tau}_{\dot{\gamma}(\tau)}Z'(\tau)\r>_{\tau}\nonumber\\
    &=2{\rm Ric}_{\tau}(Z(\tau),Z'(\tau))-{\rm Ric}_{\tau}(Z(\tau),Z'(\tau))-{\rm Ric}_{\tau}(Z(\tau),Z'(\tau))\nonumber\\
    &=0.
\end{align}

\begin{remark}\label{rem1}
The minimal $\mathcal{L}$-geodesic $\gamma=\gamma^{t_1,t_2}_{x,y}$ of $Q(x,t_1;y,t_2)$ satisfies the
$\mathcal{L}$-geodesic equation
$$\nabla^{t}_{\dot{\gamma}(t)}\dot{\gamma}(t)=\frac{1}{2}\nabla^{t}R_{t}-2{\rm Ric}^{\sharp}_{t}(\dot{\gamma}(t))-\frac{1}{2t}\dot{\gamma}(t).$$
Therefore, $\sqrt{t}\dot{\gamma}(t)$ is not space-time parallel to $\gamma$ in general and their $g_t$-inner product
is not a constant in $t$, i.e.  $\sqrt{t}\dot{\gamma}(t)$ does not satisfy \eqref{eq4}. Therefore the coefficient in the martingale part of  \eqref{martingale}
is not constant, which is different from the  case when the metric is independent of $t$.

\end{remark}
\begin{definition}[space-time parallel transport]\label{def2}
 For $x,y\in M$ and $0<\tau_1<\tau_2\leq T$, we define a map
$P_{x,y}^{\tau_1,\tau_2}: T_xM\rightarrow T_yM$ as follows: $P_{x,y}^{\tau_1\tau_2}(\xi):=Z(\tau_2)$,
where $Z$ is the unique space-time parallel vector field along $\gamma_{x,y}^{\tau_1\tau_2}$ with $Z(\tau_1)=\xi$.
As explained in \eqref{isometry}, $P_{x,y}^{\tau_1,\tau_2}$ is an isometry from $(T_xM, g_{\tau_1})$ to $(T_yM,g_{\tau_2})$.
In addition, it smoothly depends on $(x,\tau_1,y,\tau_2)$ outside the $\mathcal{L}$-cut locus.
\end{definition}
Using the It\^{o} formula for $Q(X_t,t)$ presented in Theorem \ref{Ito-formula},  we are able to construct a parallel
coupling of $g_{\bar{\tau}_1t}$- and $g_{\bar{\tau}_2t}$-Brownian motions.

\bthm \label{thm2}
 Let $x\neq y$ and
$0<\bar{\tau}_1<\bar{\tau}_2<T$ be fixed. For any $s>0$,
\ there exist two Brownian motions $B_t$ and
  $\tilde{B}_t$ on a completed filtered probability space $(\Omega,\{\mathscr{F}_t\}_{t\geq 0},\mathbb{P})$ such that for all $t\in [s,T/\bar{\tau}_2)$,
  $${\bf 1}_{\{({X}_t,\bar{\tau}_1t;\tilde{X}_t,\bar{\tau}_2t)\notin \mL{\rm
  Cut}\}}\vd \tilde{B}_t={\bf 1}_{\{({X}_t,\bar{\tau}_1t; \tilde{X}_t,\bar{\tau}_2t)\notin \mL{\rm
  Cut}\}}(\tilde{u}_t)^{-1}P_{{X}_t,\tilde{X}_t}^{\bar{\tau}_1t,\bar{\tau}_2t}u_{t}\vd B_t$$
holds, where ${X}_t$ with lift $u_{t}$ and
$\tilde{X}_{t}$ with lift $\tilde{u}_t$ solve the equation
\be\label{parra}\begin{cases} \vd
{X}_t=\sqrt{2\bar{\tau}_1}u_{t}\circ \vd B_t,&
X_{s}=x;\\
\vd \tilde{X}_t=\sqrt{2\bar{\tau}_2}\tilde{u}_t\circ \vd \tilde{B}_t,&
\tilde{X}_{s}=y.
\end{cases} \de
Moreover,
\begin{align}\label{eq5}
\vd
Q({X}_t,\bar{\tau}_1t;\tilde{X}_t, \bar{\tau}_2t)\leq & 2\sqrt{2t}\l|\bar{\tau}_1P_{X_t,\tilde{X}_t}^{\bar{\tau}_1t,\bar{\tau}_2t}\dot{\gamma}(\bar{\tau}_1t)-\bar{\tau}_2\dot{\gamma}(\bar{\tau}_2t)\r|_{\bar{\tau}_2t}\vd b_t\nonumber\\
&+\l(\frac{d}{\sqrt{t}}(\sqrt{\bar{\tau}_1}-\sqrt{\bar{\tau}_2})-\frac{1}{2t}Q(\bar{\tau}_1t,{X}_t;\bar{\tau}_2t,\tilde{X}_t)\r)\vd t,
\end{align}
where  $\gamma:[\bar{\tau}_1t,\bar{\tau}_2t]\rightarrow M$ is the $\mathcal{L}$-geodesic from
$X_t$ to $\tilde{X}_t$.

\ethm
\begin{proof}
We denote $Q(t,x,y):=Q(x, \bar{\tau}_1 t; y,\bar{\tau}_2t)$ for simplicity.
 Our proof is divided into two parts.

${ \mathbf{(a)}}$ \ First, we give the construction of the couplings.
Recall that $u_t$, the horizontal lift of $X_t$, satisfies the
following SDE
$$\begin{cases}
 \vd u_t=\sqrt{2\bar{\tau}_1}\dsum_{i=1}^{d}H_{i}(\bar{\tau}_1t,u_t)\circ \vd
B_t^{i}-\frac{1}{2}\bar{\tau}_1\dsum_{\alpha,
\beta}\mathcal{G}_{\alpha, \beta}(\bar{\tau}_1t, u_t)V_{\alpha \beta}(u_t)\vd t,\medskip\\
u_s\in \mathcal{O}_{\bar{\tau}_1s}(M),\ {\bf p}u_s=x.
\end{cases}$$
Now, for given $x\neq y$ with $(x, \bar{\tau}_1t;y, \bar{\tau}_2t)\notin \LC$, let
$\gamma$ be the minimal geodesic from $x$ to $y$. Recall that
$P^{\bar{\tau}_1t,\bar{\tau}_2t}_{x,y}$ are the parallel   operators.
To get rid of the trouble that $P^{\bar{\tau}_1t,\bar{\tau}_2t}_{x,y}$ does not exist on
$\LC$, we modify this operator
so that it vanishes in a neighborhood of this set. To this end,
for any $n\geq 1$ and $\varepsilon\in (0,1)$, let
$h_{n,\varepsilon}\in C^{\infty}([s,T/\bar{\tau}_2)\times M\times M)$ such that
$$0 \leq h_{n,\varepsilon}\leq (1-\varepsilon),\ h_{n,\varepsilon}|_{C_{2n}}=0,\ h_{n,\varepsilon}|_{C_n^c}={1-\varepsilon},$$
where ${C}_n=\{(t,x,y)\in [s,T/\bar{\tau}_2)\times M\times M: {\bf d}_{\vec{g}}\otimes {\bf d}_{\vec{g}}((x, \bar{\tau}_1t;y,\bar{\tau}_2t),\LC)\leq 1/n\}$.
Let $\tilde{u}^{n,\varepsilon}_t$ and
$\tilde{X}_t^{n,\varepsilon}:={\bf
p}\tilde{u}^{n,\varepsilon}_t$ solve the SDE
\begin{equation}\label{SDE2}
\begin{cases}
 \vd \tilde{u}_t^{n,\varepsilon}=\sqrt{2\bar{\tau}_2}h_{n,\varepsilon}(t,X_t,\tilde{X}_t^{n,\varepsilon})\dsum_{i=1}^{d}H_{i}(\bar{\tau}_2 t,\tilde{u}_t^{n,\varepsilon})\circ \vd
\tilde{B}_t^{i}-\frac{1}{2}\bar{\tau}_2\dsum_{\alpha,
\beta}\mathcal{G}_{\alpha, \beta}(\bar{\tau}_2t, \tilde{u}_t^{n,\varepsilon})V_{\alpha \beta}(\tilde{u}_t^{n,\varepsilon})\vd t\\
\qquad \qquad
+\sqrt{2\bar{\tau}_2[1-(h_{n,\varepsilon})^2(t,X_t,\tilde{X}_t^{n,\varepsilon})]}\dsum_{i=1}^{d}H_{i}(\bar{\tau}_2t,\tilde{u}_t^{n,\varepsilon})\circ
\vd {B'_t}^{i},\medskip\\
\tilde{u}_s^{n,\varepsilon}\in \mathcal{O}_{\bar{\tau}_2s}(M),\ {\bf
p}\tilde{u}_s^{n,\varepsilon}=y,
\end{cases}
\end{equation}
where $B'_t$ is a Brownian motion on $\mathbb{R}^d$ independent of
$B_t$, and $\vd
\tilde{B}_t={(\tilde{u}^{n,\varepsilon}_t)}^{-1}P^{\bar{\tau}_1t,\bar{\tau}_2t}_{X_t,
\tilde{X}_t^{n,\varepsilon}}u_t\vd B_t $. Since the coefficients
involved in (\ref{SDE2}) are at least $C^{1}$, the solution
$\tilde{u}^{n,\varepsilon}_t$ exists uniquely.

Let us observe that $(u_t,\tilde{u}^{n,\varepsilon}_t)$ is
generated by
\begin{align*}
&L^{n,\varepsilon}_{\mathcal{O}(M\times M)}(t)(u_t,\tilde{u}^{n,\varepsilon}_t)\\
&:=\bar{\tau}_1\Delta_{\mathcal{O}_{\bar{\tau}_1t}(M)}(u_t)+\bar{\tau}_2\Delta_{\mathcal{O}_{\bar{\tau}_2t}(M)}(\tilde{u}^{n,\varepsilon}_t)\\
&\quad +\sqrt{\bar{\tau}_1 \bar{\tau}_2}h_{n,\varepsilon}(t,X_t,\tilde{X}_t^{n,\varepsilon})\sum_{i,j=1}^{d}\l<P^{\bar{\tau}_1t,\bar{\tau}_2t}_{X_t,\tilde{X}_t^{n,\varepsilon}}u_te_i,
\tilde{u}^{n,\varepsilon}_te_j\r>_{\bar{\tau}_2t}H(\bar{\tau}_1t,u_t)H(\bar{\tau}_2t,\tilde{u}^{n,\varepsilon}_t)\\
&\quad-\frac{1}{2}\bar{\tau}_1\dsum_{\alpha,
\beta}\mathcal{G}_{\alpha, \beta}(\bar{\tau}_1t, u_t)V_{\alpha \beta}(u_t)\vd t-\frac{1}{2}\bar{\tau}_2\dsum_{\alpha,
\beta}\mathcal{G}_{\alpha, \beta}(\bar{\tau}_2t, \tilde{u}_t^{n,\varepsilon})V_{\alpha \beta}(\tilde{u}_t^{n,\varepsilon})\vd t.
\end{align*}
Next, let
\begin{align*}
L^{n,\varepsilon}_{M\times M}(t)(x,y):=&\bar{\tau}_1\Delta_{\bar{\tau}_1t}(x)+\bar{\tau}_2\Delta_{\bar{\tau}_2t}(y)
+\sqrt{\bar{\tau}_1 \bar{\tau}_2}h_{n,\varepsilon}(t,x,y)\sum_{i,j=1}^{d}\l<P^{\bar{\tau}_1t,\bar{\tau}_2t}_{x,y}V_i,
W_j\r>_{\bar{\tau}_2t}V_iW_j,
\end{align*}
where $\{V_i\}$ and $\{W_i\}$ are orthonormal bases at $x$ and $y$ with respect to the metrics $g_{\bar{\tau}_1t}$ and $g_{\bar{\tau}_2t}$ respectively.
 It is easy to see that
$(X_t,\tilde{X}_t^{n,\varepsilon}):=({\bf p}u_t,{\bf p} \tilde{u}_t^{n,\varepsilon})$ is generated by
$L_{M\times M}^{n,\varepsilon}(t)$ and hence is a coupling of $g_{\bar{\tau}_1t}$- and $g_{\bar{\tau}_2t}$-Brownian motions, as the marginal operators of
$L_{M\times M}^{n,\varepsilon}(t)$ coincide with $\bar{\tau}_1\Delta_{\bar{\tau}_1t}$ and $\bar{\tau}_2\Delta_{\bar{\tau}_2t}$ respectively.

Since in some neighborhood of $\LC$,
the coupling is independent and hence behaves as a Brownian
motion  on $M\times M$, we obtain from Theorem \ref{Ito-formula} that
\begin{align*}
\vd Q(t,X_t,\tilde{X}_t^{n,\varepsilon})=&
\sqrt{8t\l|\bar{\tau}_1P_{X_t,\tilde{X}_t^{n,\varepsilon}}^{\bar{\tau}_1t,\bar{\tau}_2t}\dot{\gamma}_{n,\varepsilon}(\bar{\tau}_1t)-\bar{\tau}_2h_{n,\varepsilon}\dot{\gamma}_{n,\varepsilon}(\bar{\tau}_2t)\r|_{\bar{\tau}_2t}^2+8t(1-h_{n,\varepsilon}^2)\bar{\tau}_2^2|\dot{\gamma}_{n,\varepsilon}(\bar{\tau}_2t)|^2_{\bar{\tau}_2t}}
\ \vd b^{n,\varepsilon}_t\nonumber\\
&+\l\{\frac{\partial Q}{\partial t}+{\bf 1}_{(\Upsilon\setminus\LC)}[h_{n,\varepsilon}I+(1-h_{n,\varepsilon})S]\r\}(t,
X_t,\tilde{X}_t^{n,\varepsilon})\vd t-\vd l_t^{n,\varepsilon},
\end{align*}
where  $\gamma_{n,\varepsilon}:[\bar{\tau}_1t,\bar{\tau}_2t]\rightarrow M$ is the $\mathcal{L}$-geodesic from
$X_t$ to $\tilde{X}^{n,\varepsilon}_t$,  $b_t^{n,\varepsilon}$ is an one-dimensional Brownian motion,
$l_t^{n,\varepsilon}$ is an increasing process which increases only
when $(X_t, \bar{\tau}_1t;\tilde{X}_t^{n,\varepsilon},\bar{\tau}_2t)\in \LC$,
 and
\begin{align*}
&S(t,x,y):=\bar{\tau}_1\Delta_{\bar{\tau}_1t}Q(t,\cdot,
y)(x)+\bar{\tau}_2\Delta_{\bar{\tau}_2t}Q(t,x,\cdot)(y);\\
& I(t,x,y):=\sum_{i=1}^{d}(\sqrt{\bar{\tau}_1}V_i+\sqrt{\bar{\tau}_2}P^{\bar{\tau}_1t,\bar{\tau}_2t}_{x,y}V_i)^2Q(t,x,y).
\end{align*}
Then, let $\mathbb{P}^{(x,y)}_{n,\varepsilon}$ be the distribution
of $(X_t,\tilde{X}_t^{n,\varepsilon})$, which is a probability
measure on the path space $M^{T}_x\times M^{T}_y$, where
$$M_x^{T}:=\{\gamma\in C([s,T/\bar{\tau}_2), M):\gamma_s=x\}$$
is equipped with the $\sigma$-field $\mathscr{F}_{x}^{T}$
induced by all measurable cylindric functions. Note that
$(M_x^{T},\mathscr{F}_x^{T})$ is metrizable with the
distance
$$\tilde{\rho}(\xi,\eta):=\sum_{n=1}^{\infty}2^{-n}\l(1\wedge\sup_{t\in [n,(n+t)\wedge T/\bar{\tau}_2)}{\bf d}_{\vec{g}}(\bar{\tau}_1t,\xi_t; \bar{\tau}_2t,\eta_t)\r).$$
Furthermore, $(M_x^{T},\tilde{\rho})$ is a Polish space.
Then $M_x^{T}\times M_y^{T}$ is a Polish space too. It is
easy to see that $\{\mathbb{P}^{x,y}_{n,\varepsilon}: n\geq
1,\varepsilon>0\}$ is tight (see \cite[Lemma 4]{Re}), since they are the couplings of
$\mathbb{P}^x$ and $\mathbb{P}^y$. We take $n_k\rightarrow\infty$ and $\varepsilon_l\rightarrow 0$ such
 that $\mathbb{P}^{x,y}_{n_k,\varepsilon_l}$ converges weakly to some
 $\mathbb{P}_{\varepsilon_l}^{x,y}$ ($l\geq 1$) as $k\rightarrow\infty$ while $\mathbb{P}^{x,y}_{\varepsilon_l}$
 converges weakly to some $\mathbb{P}^{x,y}$ as $l\rightarrow\infty$. Then $\mathbb{P}^{x,y}$ is also a coupling of $\mathbb{P}^x$
and $\mathbb{P}^y$.

Now, let $({X}_t, {\tilde{X}}_t)$ be the coordinate  process in $(M_x^{T}\times M_y^{T},
\mathscr{F}_x^{T}\times \mathscr{F}_y^{T})$ and
$\{\mathscr{F}_t\}_{t\geq s}$ be the natural filtration.   Define
$$\tilde{L}(t)(x,y):=\bar{\tau}_1\Delta_{\bar{\tau}_1t}(x)+\bar{\tau}_2\Delta_{\bar{\tau}_2t}(y)+1_{(\Upsilon\setminus\LC)}(x,\bar{\tau}_1t;y, \bar{\tau}_2t)\sum_{i,j=1}^d\l<P_{x,y}^{\bar{\tau}_1t,\bar{\tau}_2t}V_i,W_j\r>_{\bar{\tau}_2t}V_iW_j,$$
It is trivial to see that $\mathbb{P}^{x,y}$ solves the martingale problem for $\tilde{L}(t)$
 up to $T/\bar{\tau}_2$ (see \cite[Theorem 2]{ATW}), i.e.
 $$f({X}_t,\tilde{X}_t)-\int_0^t\tilde{L}(s)f(X_s,\tilde{X}_s)\vd s$$
 is a $\mathbb{P}^{x,y}$-martingale w.r.t.  $\mathscr{F}$.
 Then $({X}_t,\tilde{X}_t)$ under $\mathbb{P}^{x,y}$ is a coupling of the
$g_{\bar{\tau}_1t}$- and $g_{\bar{\tau}_2t}$-Brownian motions starting from $(x,y)$, i.e. the solution of
(\ref{parra}).
\medskip

 $\mathbf{(b)}$\
  We first claim that the two sets
\begin{align*}
\{t\in[s,T/\bar{\tau}_2)\ |\ (X_t,\bar{\tau}_1t;\tilde{X}_t^{n,\varepsilon}, \bar{\tau}_2t)\in \LC\}\ \mbox{and}\
\{t\in[s,T/\bar{\tau}_2)\ | \ (X_t, \bar{\tau}_1t;\tilde{X}_t^{\varepsilon}, \bar{\tau}_2t)\in \LC\}
\end{align*} have Lebesgue measure zero almost surely.
This assertion can be checked similarly as in Lemma \ref{pmea0}
by observing that $L^{n,\varepsilon}_{M\times M}(t)$ is strictly elliptic and all the coefficients are $C^{\infty}$,
then $\mathbb{P}^{x,y}((X_t,
\tilde{X}_t^{n,\varepsilon})\in A)$ has a density $p^{n,
\varepsilon}_t(x,y;z,w)$ with respect to the product volume measure  ${\rm d} {\rm vol}_{\bar{\tau}_1t} \otimes {\rm d} {\rm vol}_{\bar{\tau}_2t}$ (see \cite[Theorem 3.16]{Fr}),  where ${\rm d}{\rm vol}_{t}$ is the volume measure w.r.t. the metric $g_{t}$.
Then, for $f\in C^2(\mathbb{R})$ with $f'\geq 0$ be fixed, let
\begin{align*}
\vd N_t(f)=& \vd f\circ Q(t,X_t,\tilde{X}_t)-\bigg[\l(I+\frac{\partial Q}{\partial t}\r)f'\circ Q(t,X_t,\tilde{X}_t)\\
&+4t\l|\bar{\tau}_1P_{X_t,\tilde{X}_t}^{\bar{\tau}_1t,\bar{\tau}_2t}\dot{\gamma}(\bar{\tau}_1t)-\bar{\tau}_2\dot{\gamma}(\bar{\tau}_2t)\r|^2_{\bar{\tau}_2t}f''\circ Q (t,X_t,\tilde{X}_t)\bigg]\vd t,\ \ t\in [s,T/\bar{\tau}_2).
\end{align*}
With a similar discussion as in the proof of \cite{cheng}$(b)$, we obtain
$N_{t\wedge (T/\bar{\tau}_2)}$ is a $\mathbb{P}^{x,y}$-supmartingale.
In particular, by taking explicit $f(r)=r$,
we have
\begin{align}\label{Ito-Q}
\vd Q(t, X_t,\tilde{X}_t)=\vd M_t +\l(I+\frac{\partial Q}{\partial t}\r)(t,X_t,\tilde{X}_t)\vd t-\vd l_t,\ \ t\in [s,T/\bar{\tau}_2),
 \end{align}
 where $M_t$ is a local martingale and $l_t$ is a predictable
increasing process.
By the second variation  formula of the   $\mathcal{L}$-functional (see \cite[Lemma 7.37 and Lemma 7.40]{Ricci} for instance),
$$\frac{\partial}{\partial t}Q(t,x,y)+\sum_{i=1}^d(\sqrt{\bar{\tau}_1}V_i+\sqrt{\bar{\tau}_2}P_{x,y}^{\bar{\tau}_1t,\bar{\tau}_2t}V_i)^2Q(t,x,y)\leq \frac{d}{\sqrt{t}}(\sqrt{\bar{\tau}_1}-\sqrt{\bar{\tau}_2})-\frac{1}{2t}Q(t,x,y)=: J(t,x,y).$$
It follows that
 \begin{align}\label{Ito-Q}
\vd Q(t, X_t,\tilde{X}_t)=\vd M_t +J(t,X_t,\tilde{X}_t)\vd t-\vd \tilde{l}_t,\ \ t\in [s,T/\bar{\tau}_2),
 \end{align}
where   $\tilde{l}_t$ is a larger  predictable
increasing process.  Moreover,
with a similar discussion as in the proof of  \cite[Theorem 2.1.1]{Wbook1}(d), we further obtain
$$\vd M_t=2\sqrt{2t}\l|\bar{\tau}_1P_{X_t,\tilde{X}_t}^{\bar{\tau}_1t,\bar{\tau}_2t}\dot{\gamma}(\bar{\tau}_1t)-\bar{\tau}_2\dot{\gamma}(\bar{\tau}_2t)\r|_{\bar{\tau}_2t}\vd b_t,$$
which leads to complete the proof.


\end{proof}

As an important application, we give a new proof of Topping's result \cite{Topping}, i.e. the contraction in the normalized
$\mathcal{L}$-transportation cost. We point out that this result recovers the monotonicity
of Perelman's monotonic quantities (involving both $\mathcal{W}$-entropy and $\mathcal{L}$-length), which are central in his work on Ricci flow (see \cite{Pe1,Pe2,Pe3}).

Suppose that $\{P_{s,t}\}_{0<s<t<T/\bar{\tau}_2}$ and $\{T_{s,t}\}_{0<s<t<T/\bar{\tau}_2}$ be the Markov inhomogeneous semigroup of the $g_{\bar{\tau}_1t}$-Brownian motion and $g_{\bar{\tau}_2t}$-Brownian motion respectively.
To the $\mathcal{L}$-distance $Q$,
we associate the Monge-Kantorovich minimization between
two probability measures on $M$,
  \begin{align*}
  W^{\mathcal{L}}(\mu,t_1;\nu,t_2)=\inf_{\eta\in \mathscr{C}(\mu,\nu)}\int_{M\times M}Q(x,t_1;y,t_2)\vd \eta(x,y),\end{align*}
  where $\mathscr{C}(\mu,\nu)$ is the set of all probability measures on $M\times M$ with marginal $\mu$ and
$\nu$.
Then, using the coupling constructed
in Theorem \ref{thm2}, we have
\begin{theorem}\label{L-optimal-transportation}
Assume that $M$ has bounded curvature tensor, i.e.
$$\sup_{x\in M, t\in [0,T)}|{\rm Rm}_{t}|(x)<\infty.$$
Then for $0<\bar{\tau}_1<\bar{\tau}_2<T$ the normalized $\mathcal{L}$-transportation cost
$$\Theta(t, \delta_xP_{s,t},\delta_yT_{s,t}):=2(\sqrt{\bar{\tau}_2t}-\sqrt{\bar{\tau}_1t})W^{\mathcal{L}}(\delta_xP_{s,t},\bar{\tau}_1t;\delta_yT_{s,t},\bar{\tau}_2t)-2d(\sqrt{\bar{\tau}_2t}-\sqrt{\bar{\tau}_1t})^2$$
is a non-increasing function of $t\in [s,T/\bar{\tau}_2)$, that is
$$\Theta(t,\delta_xP_{s,t},\delta_yT_{s,t})\leq 2(\sqrt{\bar{\tau}_2s}-\sqrt{\bar{\tau}_1s})Q(x,\bar{\tau}_1s;y,\bar{\tau}_2s)-2d(\sqrt{\bar{\tau}_2s }-\sqrt{\bar{\tau}_1s})^2.$$
\end{theorem}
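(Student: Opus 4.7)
The plan is to use the coupling constructed in Theorem \ref{thm2} as an admissible transport plan. Fix $x, y \in M$ and let $(X_t, \tilde{X}_t)_{t \in [s, T/\bar{\tau}_2)}$ be the parallel coupling of Theorem \ref{thm2} with $(X_s, \tilde{X}_s) = (x, y)$, so that $X_t \sim \delta_x P_{s,t}$ and $\tilde{X}_t \sim \delta_y T_{s,t}$. Setting $\phi(t) := \mathbb{E}\bigl[Q(X_t, \bar{\tau}_1 t; \tilde{X}_t, \bar{\tau}_2 t)\bigr]$, the definition of $W^{\mathcal{L}}$ as an infimum over couplings gives
$$W^{\mathcal{L}}(\delta_x P_{s,t}, \bar{\tau}_1 t; \delta_y T_{s,t}, \bar{\tau}_2 t) \leq \phi(t),$$
with equality at $t = s$ because the coupling is then the deterministic pair $(x,y)$. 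Thus it suffices to show that
$$F(t) := 2(\sqrt{\bar{\tau}_2 t} - \sqrt{\bar{\tau}_1 t})\,\phi(t) - 2d(\sqrt{\bar{\tau}_2 t} - \sqrt{\bar{\tau}_1 t})^2$$
is non-increasing on $[s, T/\bar{\tau}_2)$; the desired bound then follows from $\Theta(t) \leq F(t) \leq F(s) = \Theta(s)$.

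Taking expectation in the pathwise inequality \eqref{eq5} (after arguing that the local martingale is truly a martingale and that the predictable increasing process has the favorable sign) yields
$$\phi'(t) \leq \frac{d(\sqrt{\bar{\tau}_1} - \sqrt{\bar{\tau}_2})}{\sqrt{t}} - \frac{\phi(t)}{2t},$$
which, upon multiplication by $\sqrt{t}$, collapses to the clean form
$$\frac{d}{dt}\bigl(\sqrt{t}\,\phi(t)\bigr) \leq d(\sqrt{\bar{\tau}_1} - \sqrt{\bar{\tau}_2}).$$
Since $F(t) = 2(\sqrt{\bar{\tau}_2} - \sqrt{\bar{\tau}_1})\bigl[\sqrt{t}\,\phi(t) - dt(\sqrt{\bar{\tau}_2} - \sqrt{\bar{\tau}_1})\bigr]$ with the leading factor positive, this gives
$$F'(t) \leq 2(\sqrt{\bar{\tau}_2} - \sqrt{\bar{\tau}_1})\bigl[d(\sqrt{\bar{\tau}_1} - \sqrt{\bar{\tau}_2}) - d(\sqrt{\bar{\tau}_2} - \sqrt{\bar{\tau}_1})\bigr] = -4d(\sqrt{\bar{\tau}_2} - \sqrt{\bar{\tau}_1})^2 \leq 0,$$
which delivers the monotonicity (in fact strictly). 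This is the entire skeleton of the proof.

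The main obstacle is the passage from \eqref{eq5} to the deterministic ODE inequality for $\phi$. One must verify that the local martingale with diffusion coefficient $2\sqrt{2t}\bigl|\bar{\tau}_1 P_{X_t, \tilde{X}_t}^{\bar{\tau}_1 t, \bar{\tau}_2 t}\dot\gamma(\bar{\tau}_1 t) - \bar{\tau}_2 \dot\gamma(\bar{\tau}_2 t)\bigr|_{\bar{\tau}_2 t}$ is a true martingale, at least after localization by a suitable sequence of stopping times, and that the increasing process coming from $\mathcal{L}$-cut-locus crossings can be discarded when taking expectation. Under the global curvature bound $\sup_{M \times [0, T)} |{\rm Rm}_t| < \infty$, Lemma \ref{estimation-of-Q} combined with the identity $|\nabla^{\tau}_1 Q|_{\tau}^2 = 4\tau|\dot\gamma(\tau)|_\tau^2$ from Proposition \ref{Lcut}(4) provides the required uniform control of $|\dot\gamma|_\tau$ along minimizing $\mathcal{L}$-geodesics between points whose $\mathcal{L}$-distance remains bounded, and a standard stopping-time localization argument closes this gap.
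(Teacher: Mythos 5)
Your proposal is correct and takes essentially the same route as the paper: use the coupling of Theorem~\ref{thm2} as an admissible transport plan, invoke the pathwise inequality \eqref{eq5}, and take expectations to obtain the required monotonicity. The paper compresses this into the single assertion that $\Theta(t,X_t,\tilde X_t)$ is a supermartingale; you unpack that claim into an explicit ODE bound $\frac{d}{dt}(\sqrt{t}\,\phi(t)) \leq d(\sqrt{\bar{\tau}_1}-\sqrt{\bar{\tau}_2})$ for $\phi(t)=\mathbb{E}\,Q(X_t,\bar{\tau}_1 t;\tilde X_t,\bar{\tau}_2 t)$, which is the same drift computation carried out in detail, and you correctly flag (as the paper does not) that passing from the local-martingale statement in \eqref{eq5} to the deterministic inequality requires a localization and uniform-integrability argument supplied by the global curvature bound.
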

\begin{proof}
By \eqref{parra}, there exist two coupled $g_{\bar{\tau}_1t}$- and $g_{\bar{\tau}_2t}$-Brownian motions $(X_t)_{t\in [s,T/\bar{\tau}_2)}$
and $(\tilde{X}_t)_{t\in [s,T/\bar{\tau}_2)}$ with initial values $X_{s}=x$ and $\tilde{X}_{s}=y$ such that
the process $(\Theta(t,X_{\bar{\tau}_1t},\tilde{X}_{\bar{\tau}_2t}))_{t\in [s,T/\bar{\tau}_2)}$ is a supermartingale. Taking the expectation of this supermartingale leads to complete the proof.
\end{proof}
   Denote   $P_{s,t}(\cdot,\vd x):=u(\bar{\tau}_1t,x)\vd {\rm vol}_{\bar{\tau}_1t}$ and $T_{s,t}(\cdot,\vd x):=u(\bar{\tau}_2t,x)\vd {\rm vol}_{\bar{\tau}_2t}$. This density $u$ solves the following heat equation
$$\frac{\partial u}{\partial \tau}=\Delta_{\tau}u-R_{\tau}u,$$
where $R_{\tau}$ is the scalar curvature w.r.t. the metric $g_{\tau}$.
Hence, the conclusion presented in Theorem \ref{L-optimal-transportation} is consistent with  that in \cite{Topping}.
\bigskip
\bigskip

\noindent\textbf{Acknowledgements}  \ The author  thank Professor Feng-Yu Wang for valuable suggestions and  this work is supported in part
by NNSFC(11131003), SRFDP, and the Fundamental Research Funds for the
Central Universities.

\end{document}